\begin{document}
\providecommand{\keywords}[1]{\textbf{\textit{Keywords: }} #1}
\newtheorem{theorem}{Theorem}[section]
\newtheorem{lemma}[theorem]{Lemma}
\newtheorem{proposition}[theorem]{Proposition}
\newtheorem{corollary}[theorem]{Corollary}
\theoremstyle{definition}
\newtheorem{definition}{Definition}[section]
\theoremstyle{remark}
\newtheorem{remark}{Remark}
\newtheorem{conjecture}{Conjecture}[section]
\newtheorem{question}{Question}[section]
\newtheorem{example}{Example}[section]

\def\p{\mathfrak{p}}
\def\q{\mathfrak{q}}
\def\s{\mathfrak{S}}
\def\Gal{\mathrm{Gal}}
\def\Ker{\mathrm{Ker}}
\def\Coker{\mathrm{Coker}}
\newcommand{\cc}{{\mathbb{C}}}   
\newcommand{\ff}{{\mathbb{F}}}  
\newcommand{\nn}{{\mathbb{N}}}   
\newcommand{\qq}{{\mathbb{Q}}}  
\newcommand{\rr}{{\mathbb{R}}}   
\newcommand{\zz}{{\mathbb{Z}}}

\title{On fake subfields of number fields}
\author{Joachim K\"onig}
\address{Korea National University of Education, Department of Mathematics Education, 28173 Cheongju, South Korea}
\begin{abstract}
We investigate the failure of a local-global principle with regard to ``containment of number fields"; i.e., we are interested in pairs of number fields $(K_1,K_2)$ such that $K_2$ is not a subfield of any algebraic conjugate $K_1^\sigma$ of $K_1$, but the splitting type of any single rational prime $p$ unramified in $K_1$ and in $K_2$ is such that it cannot rule out the containment $K_2\subseteq K_1^\sigma$. Examples of such situations arise naturally, but not exclusively, via the well-studied concept of arithmetically equivalent number fields. We give some systematic constructions yielding ``fake subfields" (in the above sense) which are not induced by arithmetic equivalence. This may also be interpreted as a failure of a certain local-global principle related to zeta functions of number fields.
\end{abstract}
\keywords{Number fields; arithmetic equivalence; local-global principles; permutation groups}
\maketitle
\section{Introduction and main results}

Two number fields $K_1$ and $K_2$ are called {\it arithmetically equivalent} if they have they same zeta function. This is equivalent to the property that all primes of $\mathbb{Q}$ which are unramified in $K_1$ and $K_2$ have the same splitting pattern in those two fields, i.e., their Frobenius has the same cycle type in the two induced permutation actions. In other words, $K_1$ and $K_2$ are in some sense indistinguishable by purely local investigation. 
Such number fields have featured prominently in the context of several arithmetic problems, such as the Davenport-Lewis-Schinzel problem on reducibility of variable separated equations, and the investigation of pairs of Kronecker conjugate polynomials, most notably by Fried (e.g, \cite{Fried}) and M\"uller (\cite{Mue}). 
See, e.g., \cite{Perlis} for an introduction as well as some systematic nontrivial examples of arithmetically equivalent number fields. Recently, stronger versions of 
arithmetic equivalence (e.g., \cite{Prasad}, \cite{Sutherland}) have also been studied, and links to other kinds of ``equivalence notions" in field arithmetic were explored e.g. in \cite{N12} and \cite{LN}. 
In this paper, we consider a natural generalization of this notion: two fields $K_1$ and $K_2$ for which purely local investigations (namely, of the splitting types of unramified primes) are insufficient to rule out the possibility that $K_2$ is a subfield of $K_1$.  Concretely, we make the following definitions.
\begin{definition}
Let $K_1$ and $K_2$ be number fields of degrees $n:=[K_1:\mathbb{Q}]$ and $m:=[K_2:\mathbb{Q}]$, such that $m$ divides $n$. We say that $K_2$ is \textit{locally sub-}$K_1$ if the following holds for all primes $p$ of $\mathbb{Q}$ which are unramified in $K_1$ and $K_2$: if $c_1,\dots, c_d$ are the residue degrees of the primes extending $p$ in $K_2$, then there exist positive integers $e_{i,1}, \dots, e_{i,r_i}$ such that $\sum_{j=1}^{r_i} e_{i,j} = \frac{n}{m}$ for all $i\in \{1,\dots, d\}$ and the residue degrees of primes extending $p$ in $K_1$ are exactly the $c_i\cdot e_{i,j}$ ($i=1,\dots, d$, $j=1,\dots, r_i$).
 When $K_2$ is {locally sub-}$K_1$, but not contained in any conjugate $K_1^\sigma$ of $K_1$ ($\sigma\in \textrm{Gal}(\overline{\mathbb{Q}}/\mathbb{Q})$), we shall also call $K_2$ a \textit{fake subfield} of $K_1$.
\end{definition}

Obviously, if $K_2\subseteq K_1$, then $K_2$ is locally sub-$K_1$ by the multiplicativity of residue degrees in towers of extensions. 
The name ``fake subfield" is inspired by a recent article of Corvaja \cite{Corv} on ``fake values" of polynomials, dealing with a failure of local-global principles similar in spirit to the one considered here, but with regard to value sets of polynomials.

The notion of $K_2$ being locally sub-$K_1$ is furthermore indeed a natural generalization of arithmetic equivalence, the latter being exactly the case of two fields being mutually locally subfields of each other; in other words, our notion yields an order relation on the set equivalence classes of number fields modulo arithmetic equivalence. 
Since any field $K_2$ arising from $K_1$ via arbitrary iteration of ``taking subfields" and ``taking arithmetically equivalent fields" will automatically be {locally sub-}$K_1$, a natural question is whether all examples of fake subfields arise from arithmetic equivalence. The answer will quickly be seen to be ``no". After reviewing the situation in small degrees in Section \ref{sec:small}, the main purpose of this article is to provide explicit infinite families of examples. In particular, we show the following:

\begin{theorem}
\label{thm:main}
For all odd primes $p$, the following hold:
\begin{itemize}
\item[a)]
Every degree $p+2$ number field $K$ with symmetric Galois group $S_{p+2}$ occurs as the fake subfield of some number field $F$.
\item[b)] There exist infinitely many solvable number fields of degree $2p$ occurring as the fake subfield of some number field $F$.
\end{itemize}
Furthermore, all these examples may be chosen such that they are not induced by arithmetic equivalence.
\end{theorem}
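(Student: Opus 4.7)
The plan is to first translate the problem into group theory. Pick a Galois extension $L/\qq$ containing both $K$ and the $F$ to be constructed; set $G=\Gal(L/\qq)$ and let $H_1,H_2\le G$ be the subgroups with $F=L^{H_1}$ and $K=L^{H_2}$. In this language, ``$K$ is locally sub-$F$'' becomes the purely combinatorial cycle-refinement condition on the coset actions of $G$ on $G/H_1$ and $G/H_2$: for every $g\in G$, each cycle of $g$ on $G/H_2$ of length $c$ must lift to cycles on $G/H_1$ of lengths $c\cdot e_j$ ($e_j\ge 1$, summing to $[G:H_1]/[G:H_2]$). Non-containment of $K$ in any conjugate $F^\sigma$ becomes the statement that no conjugate of $H_1$ lies in $H_2$, and the ``not induced by arithmetic equivalence'' clause translates to the non-existence of a subgroup $U$ with $H_1^\sigma\le U\le H_2$ that is Gassmann-equivalent to $H_2$ (i.e., has the same permutation character). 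Both parts thereby reduce to (i) a group-theoretic construction of such a triple $(G,H_1,H_2)$, and (ii) a number-theoretic realization compatible with the prescribed $K$.

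For part (a), I would proceed by taking the Galois closure $\widetilde K$ (with group $S_{p+2}$), and then choosing an auxiliary Galois extension $E/\qq$, linearly disjoint from $\widetilde K$, with a small solvable Galois group $\Gamma$. Setting $L=\widetilde K\cdot E$, $G=S_{p+2}\times\Gamma$ and $H_2=S_{p+1}\times\Gamma$ places $K$ in the correct spot. The candidate $H_1$ is a ``twisted'' subgroup built from the sign character of $S_{p+2}$ combined with a homomorphism to $\Gamma$, chosen so that its projection onto $S_{p+2}$ is a transitive subgroup (thus not conjugate into any $S_{p+1}$). The oddness of $p+2$ (a consequence of $p$ odd) is what makes cycle compatibility possible: it controls the parities of $(p+2)$-cycles and allows the sign-twist to interlock the two coset actions in the required way. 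The refinement condition is then verified class by class on the conjugacy classes of $G$, exploiting the product structure to track cycle lengths explicitly.

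For part (b), analogous reasoning should be applied in the solvable world: $G$ is built as a solvable extension of a transitive degree-$2p$ group (e.g., $D_p$ or a metacyclic $C_p\rtimes C_k$) by a suitable small auxiliary group, with $H_2$ a point stabilizer and $H_1$ chosen so that cycle structures refine compatibly. Since $2p$ has only four divisors, the combinatorial check is very manageable. Infinitude is obtained by varying the auxiliary factor or its ramification data, and solvable realizations over $\qq$ are guaranteed by Shafarevich's theorem; the non-isomorphism / non-arithmetic-equivalence of the infinitely many resulting $K$'s follows from ramification invariants.

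The hard part, in both cases, is the simultaneous verification of the three conditions: cycle refinement for every conjugacy class of $G$, non-containment of any conjugate of $H_1$ in $H_2$, and absence of a Gassmann-equivalent intermediate subgroup. In part (a) there is an additional uniformity subtlety: a single group-theoretic construction must accommodate \emph{every} $S_{p+2}$-field $K$, which requires the auxiliary extension $E$ to depend only on $p$ (up to generic linear-disjointness conditions with $\widetilde K$), and the embedding of $H_1$ to be sufficiently canonical that the refinement verification is independent of the specific $K$ chosen.
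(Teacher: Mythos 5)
Your general framework (reduce to a triple $(G,H_1,H_2)$ with the cycle-refinement condition, non-conjugacy of $H_1$ into $H_2$, and no Gassmann-equivalent intermediate subgroup) matches the paper's Lemmas \ref{lem:wreath} and \ref{lem:intersect}. But the concrete construction you propose for part (a) cannot work. You ask for $H_1\le S_{p+2}\times\Gamma$ whose projection onto $S_{p+2}$ is \emph{transitive}. A transitive subgroup of degree $p+2\ge 2$ always contains a fixed-point-free element (Cauchy--Frobenius: the average number of fixed points is the number of orbits, namely $1$, while the identity has $p+2$ fixed points). Take $g\in H_1$ projecting to such an element. Then $g$ fixes the trivial coset in $G/H_1$, i.e.\ has a $1$-cycle in the $F$-action; the refinement condition forces that $1$-cycle to lie over a $1$-cycle in $G/H_2$, i.e.\ the projection of $g$ would need a fixed point in the natural degree-$(p+2)$ action --- contradiction. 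This is precisely the content of Lemma \ref{lem:intersect}: the restriction of $\Gal(\Omega/F)$ to the Galois closure of $K$ must be a subgroup with \emph{no} global fixed point in which nevertheless \emph{every element} has a fixed point, hence necessarily intransitive. The paper's choice is $U_1\cong D_p\le S_{p+2}$ with orbit lengths $p$ and $2$ (all of whose nonidentity elements are $p$-cycles or involutions with one fixed point), and no auxiliary factor $\Gamma$ is needed at all for $S_{p+2}$; the sign-character twist plays no role. I also do not see what your sign-twist would contribute even after repairing the transitivity error.

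Beyond this, the parts you defer as ``verified class by class'' are where essentially all the work lies, and your proposal gives no indication of how they would go. For (a), the paper's verification splits on what $\sigma$ powers to and hinges on a quantitative fixed-point-proportion estimate ($2^{a-1}a!\le\frac{(p+1)!}{2p}$ with $a=\frac{p+1}{2}$) plus a divisibility argument for the cycle lengths; and the ``not induced by arithmetic equivalence'' clause is not a routine check: it uses orbit counting (Corollary \ref{cor:gassmann}), the Elsholtz--Klahn--Technau theorem, and Burnside's theorem on transitive groups of prime degree --- this is where the primality of $p$ enters, which your sketch never uses. For (b), your candidates ($D_p$ or $C_p\rtimes C_k$ acting on $2p$ points) are either regular (then $K/\mathbb{Q}$ is Galois and by the corollary to Lemma \ref{lem:intersect} has no chance of being a fake subfield) or unspecified; the paper instead takes $G=C_2\wr C_p$ with a carefully chosen subgroup $U$ generated by two involutions and again runs a fixed-point-proportion count. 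Finally, note that ``not induced by arithmetic equivalence'' refers to the fake-subfield \emph{relation} not factoring through containments and Gassmann equivalences, not to the fields $K$ being pairwise inequivalent, so your appeal to ramification invariants addresses the wrong condition.
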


We will prove the two parts of Theorem \ref{thm:main} separately as Theorem \ref{thm:symm} and Theorem \ref{thm:solv}.
In Section \ref{sec:primitive}, we consider fake subfields of fields having no nontrivial subfields. For most of our results, translation of our main notions into permutation group-theoretical properties are crucial. These are contained, together with some other basic observations, in Section \ref{sec:prep}.
Towards the end of the paper, we will consider a strenghtened version of our main definition (Section \ref{sec:ram}), whose treatment requires some additional number theoretical ideas. We concludes with some open problems in Section \ref{sec:open}.

A few arguments require a moderate amount of computer calculations. These have been performed using Magma \cite{Magma}, and the corresponding code is included in an ancillary file.

{\bf Acknowledgements}: I thank Pietro Corvaja, as well as the two anonymous referees, for helpful suggestions. I am also indebted to Daniele Garzoni for pointing out Lemma \ref{lem:2trans}.

 

\section{Some preparations}
\label{sec:prep}
The following famous result, due to Gassmann (\cite{Gassmann}), turns the problem of arithmetic equivalence into a purely group-theoretical problem.
\begin{proposition}
\label{prop:gassmann}
Two number fields $K_1$ and $K_2$ are arithmetically equivalent if and only if the following hold: 1) the Galois closure of $K_1/\mathbb{Q}$ and of $K_2/\mathbb{Q}$ is the same, say $\Omega$; and 2) the subgroups $U:=\textrm{Gal}(\Omega/K_1)$ and $V:=\textrm{Gal}(\Omega/K_2)$ of $G:=\textrm{Gal}(\Omega/\mathbb{Q})$ fulfill that each conjugacy class of $G$ intersects $U$ and $V$ in the same number of elements, i.e., for all $g\in G$ one has $|g^G\cap U| = |g^G\cap V|$. (In this case, $U$ and $V$ are also said to be Gassmann-equivalent in $G$.)
\end{proposition}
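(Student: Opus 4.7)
The plan is to translate both sides of the equivalence into statements about the permutation representations of $G$ on the coset spaces $G/U$ and $G/V$, and then to transport arithmetic data to group-theoretic data via the Chebotarev density theorem. Fix a Galois extension $\Omega/\mathbb{Q}$ containing both $K_1$ and $K_2$: for the direction ``$\Leftarrow$'' I simply take the $\Omega$ given in the statement; for the direction ``$\Rightarrow$'' I first take $\Omega$ to be the compositum of the two Galois closures, and only afterwards deduce from the argument that these two closures in fact coincide. Write $G = \Gal(\Omega/\mathbb{Q})$, $U = \Gal(\Omega/K_1)$, $V = \Gal(\Omega/K_2)$.

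The first key step is the classical dictionary: for a rational prime $p$ unramified in $\Omega$, the multiset of residue degrees of the primes of $K_1$ above $p$ equals the cycle type of $\mathrm{Frob}_p$ acting by left multiplication on $G/U$, and similarly for $K_2$ and $G/V$. Combined with the reformulation of arithmetic equivalence already recorded in the introduction (same splitting pattern at every unramified prime) and with Chebotarev (every conjugacy class of $G$ arises as a Frobenius), arithmetic equivalence becomes equivalent to the assertion that every $g \in G$ has the same cycle type on $G/U$ as on $G/V$; since the cycle type of $g$ is determined by, and determines, the fixed-point counts of all powers of $g$, this is in turn equivalent to the two permutation characters $\chi_{G/U}$ and $\chi_{G/V}$ being equal as class functions on $G$.

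It then remains to convert the character equality into the numerical Gassmann condition (2) and to handle condition (1). A standard orbit-counting argument gives $\chi_{G/U}(g) = |C_G(g)| \cdot |g^G \cap U| / |U|$, and summing the desired relation $|g^G \cap U| = |g^G \cap V|$ over a set of conjugacy class representatives forces $|U| = |V|$, after which the fixed-point formula makes character equality coincide precisely with condition (2). For condition (1), the kernel of the $G$-action on $G/U$ is the normal core $\bigcap_{x \in G} x U x^{-1}$, which under the Galois correspondence is $\Gal(\Omega/K_1^{\mathrm{gal}})$; since character-equivalent permutation representations have the same kernel, $\chi_{G/U} = \chi_{G/V}$ forces $K_1^{\mathrm{gal}} = K_2^{\mathrm{gal}}$. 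There is no genuine obstacle in the argument; the only step deserving a little care is the choice of ambient $\Omega$ in the ``$\Rightarrow$'' direction, handled as noted above.
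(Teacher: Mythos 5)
The paper offers no proof of this proposition---it is stated as a classical result with a citation to Gassmann---so there is nothing to compare against; your argument stands on its own, and it is correct. It is in fact the standard proof: the dictionary between splitting types of unramified primes and cycle types of Frobenius on $G/U$, Chebotarev to upgrade ``all unramified primes'' to ``all $g\in G$'', the observation that cycle types are encoded by fixed-point counts of powers so that the condition collapses to $\chi_{G/U}=\chi_{G/V}$, and the fixed-point formula $\chi_{G/U}(g)=|C_G(g)|\,|g^G\cap U|/|U|$ (which the paper itself quotes later in the proof of Theorem 4.2) to identify character equality with the Gassmann condition once $|U|=|V|$ is forced. Your handling of the two genuinely delicate points is also right: working a priori in the compositum of the two Galois closures for the forward direction and recovering condition (1) from equality of the cores (equal permutation characters have equal kernels), and implicitly using that a prime is unramified in $K_i$ if and only if it is unramified in the Galois closure of $K_i$, so that the finitely many excluded primes cause no trouble. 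No gaps.
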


\begin{corollary}
\label{cor:gassmann}
Let $K_1$ and $K_2$ be arithmetically equivalent number fields, with joint Galois closure $\Omega\supseteq\mathbb{Q}$. Let $G:=\textrm{Gal}(\Omega/\mathbb{Q})$, and consider  {\it any} transitive permutation action of $G$. Then $U_1:=\textrm{Gal}(\Omega/K_1)$ and $U_2:=\textrm{Gal}(\Omega/K_2)$ have the same number of orbits in this action.
\end{corollary}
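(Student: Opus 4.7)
The plan is to combine the Burnside/Cauchy--Frobenius orbit-counting lemma with the Gassmann-equivalence characterization from Proposition~\ref{prop:gassmann}. Let $X$ be the underlying set of the given transitive $G$-action. For each $i \in \{1,2\}$, Burnside's lemma gives
\[
\#\{U_i\text{-orbits on } X\} \;=\; \frac{1}{|U_i|}\sum_{u\in U_i} |\mathrm{Fix}_X(u)|.
\]

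The first step is to observe that the function $u \mapsto |\mathrm{Fix}_X(u)|$ is a class function on $G$: for any $g \in G$, the map $x \mapsto gx$ is a bijection $\mathrm{Fix}_X(u) \to \mathrm{Fix}_X(gug^{-1})$. Hence, grouping the sum above by $G$-conjugacy classes $C \subseteq G$ and choosing a representative $c_C$ of each, one obtains
\[
\sum_{u \in U_i} |\mathrm{Fix}_X(u)| \;=\; \sum_C |C \cap U_i| \cdot |\mathrm{Fix}_X(c_C)|.
\]

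The second step is to apply Gassmann equivalence: Proposition~\ref{prop:gassmann} gives $|C \cap U_1| = |C \cap U_2|$ for every conjugacy class $C$ of $G$, and specializing to $C = \{1\}$ in particular yields $|U_1| = |U_2|$, so the normalizing factors in Burnside's formula also match. Substituting both facts into the orbit formula, the right-hand sides for $i=1,2$ agree term by term, proving that $U_1$ and $U_2$ have the same number of orbits on $X$. There is no genuine obstacle; the only ingredient that is not purely formal is the class-function property of fixed-point counts, after which the statement reduces immediately to Gassmann equivalence.
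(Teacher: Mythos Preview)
Your proof is correct and takes essentially the same approach as the paper: both invoke Gassmann's criterion (Proposition~\ref{prop:gassmann}) to conclude that the sums $\sum_{u\in U_i}|\mathrm{Fix}_X(u)|$ agree, and then apply the Cauchy--Frobenius formula. The only cosmetic difference is that you group the fixed-point sum by $G$-conjugacy classes, whereas the paper groups it by the value of $|\mathrm{Fix}_X(\cdot)|$; both amount to the observation that the fixed-point count is a class function.
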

\begin{proof}
By Gassmann's criterion (Proposition \ref{prop:gassmann}), $U_1$ and $U_2$ in particular contain the same amount of elements having exactly $d$ fixed points in the given action, for each $d\ge 0$. By the Cauchy-Frobenius formula, this means that $U_1$ and $U_2$ have the same number of orbits.
\end{proof}

We now turn to some first observations around our main notion of fake subfields. This notion has several implications on the relation of the two fields involved; notably, it is already part of the definition that the degree of a number field must be divisible by the degrees of all its fake subfields. The following lemma gives a further noteworthy relation.
\begin{lemma}
\label{lem:firstobs}
If $K_2$ is {locally sub-}$K_1$, then 
the Galois closure of $K_2/\mathbb{Q}$ is contained in the one of $K_1/\mathbb{Q}$. 
In particular, any number field can have only finitely many fake subfields.
\end{lemma}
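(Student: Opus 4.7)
The plan is to prove the Galois-closure containment via Bauer's theorem (a consequence of the Chebotarev density theorem): for a Galois extension $L/\mathbb{Q}$ and an arbitrary number field $M/\mathbb{Q}$, one has $L\subseteq M$ if and only if the set of primes splitting completely in $M$ is contained, up to finitely many exceptions, in the set of primes splitting completely in $L$. Applied with $L=\widetilde{K_2}$ (the Galois closure of $K_2/\mathbb{Q}$) and $M=K_1$, it suffices to show that every rational prime $p$ which is unramified in $K_1$ and $K_2$ and splits completely in $K_1$ also splits completely in $K_2$.

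This last implication is a direct unpacking of the definition. Suppose $p$ splits completely in $K_1$, so every residue degree of $p$ in $K_1$ equals $1$. By the hypothesis that $K_2$ is locally sub-$K_1$, the residue degrees in $K_1$ are exactly the products $c_i \cdot e_{i,j}$ with $c_i$ a residue degree in $K_2$ and $e_{i,j}\ge 1$. Forcing all such products to equal $1$ forces each $c_i=1$ (and each $e_{i,j}=1$), so $p$ splits completely in $K_2$ as well. Applying Bauer's theorem then yields $\widetilde{K_2}\subseteq \widetilde{K_1}$.

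For the ``finitely many fake subfields'' assertion, I would argue as follows: any fake subfield $K_2$ of $K_1$ is contained in $\widetilde{K_2}$, hence by the first part in $\widetilde{K_1}$. Since $\widetilde{K_1}$ is a fixed number field and therefore has only finitely many subfields, there can be only finitely many possibilities for $K_2$.

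There is no substantial obstacle here; the only thing to be careful about is correctly invoking Bauer's theorem in the right direction (we want containment of Galois closures, so the Galois hypothesis in Bauer's theorem is on $\widetilde{K_2}$, while $K_1$ plays the role of the arbitrary field). The proof is essentially a translation of the definition of ``locally sub'' into the splitting-completely criterion; everything else is standard.
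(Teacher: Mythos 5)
Your proof is correct and follows essentially the same route as the paper: unramified primes splitting completely in $K_1$ must, by the definition of ``locally sub-$K_1$'', split completely in $K_2$, and Bauer's theorem then gives the containment of Galois closures, with the finiteness of fake subfields following because the Galois closure of $K_1$ has only finitely many subfields. One small caution: Bauer's theorem as you state it (with ``splits completely'' and $M$ an arbitrary number field) would deliver the generally false conclusion $L\subseteq M$ rather than $L\subseteq\widetilde{M}$; since you only use the correct consequence $\widetilde{K_2}\subseteq\widetilde{K_1}$ (equivalently, apply the theorem with $M=\widetilde{K_1}$, noting that a prime splits completely in $K_1$ if and only if it does so in $\widetilde{K_1}$), the argument stands.
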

\begin{proof}
Let $p$ be a prime which is totally split in the Galois closure of $K_1/\mathbb{Q}$. 
Since $K_2$ is locally sub-$K_1$, it follows that $p$ must also be totally split in the Galois closure of $K_2/\mathbb{Q}$. 
A well-known theorem  by Bauer (\cite{Bauer}) then asserts that the latter Galois closure is contained in the former one.
\end{proof}

For the following, some extra terminology will be useful: given a permutation $\sigma \in S_n$ consisting of exactly $d$ disjoint cycles of length $c_1\ge\dots \ge c_d$ (for short: cycle type $\lambda:=(c_1,\dots, c_d)$), and a total of $d$ cycle types $\mu_1 = (e_{1,1},\dots, e_{1,r_1}), \dots, \mu_d = (e_{d,1}, \dots, e_{d, r_d})$ of permutations in $S_d$, define the concatenation of $\lambda$ and $(\mu_1,\dots, \mu_d)$ to be the cycle type (in $S_{mn}$) of the form $(c_1\cdot e_{1,1}, \dots, c_1\cdot e_{1,r_1}, \dots, c_d\cdot e_{d,1}, \dots, c_d\cdot e_{d,r_d})$.  This corresponds exactly to the following situation in the context of splitting of prime ideals in number fields: If $K_1\supseteq K_2\supseteq \mathbb{Q}$ are number fields with $m=[K_2:\mathbb{Q}]$ and $n=[K_1:K_2]$, such that the rational prime $p$ decomposes into $d$ primes $\mathfrak{p}_1,\dots,  \mathfrak{p}_d$ of degrees $c_1,\dots, c_d$ in $K_2$, and each $\mathfrak{p}_i$ decomposes into $r_i$ primes of degree $e_{i,1},\dots, e_{i,r_i}$ in $K_1$, then the cycle type of Frobenius at $p$ in $K_1$ is exactly the concatenation defined above. In particular, a field $K$ being locally sub-$F$ is equivalent to the following: for each prime $p$ unramified in $F/\mathbb{Q}$, let $\lambda_1$ and $\lambda_2$, respectively, be the cycle type of Frobenius at $p$ in the Galois group of (the Galois closure of) $K/\mathbb{Q}$ and $F/\mathbb{Q}$ respectively; then there exists a tuple $\underline{\mu}:=(\mu_1,\dots,\mu_d)$ of cycle types such that the concatenation of $\lambda_1$ with $\underline{\mu}$ equals $\lambda_2$.

Using this, we can reword our main notions group-theoretically, namely in terms of cycle structures in the Galois groups.
\begin{lemma}
\label{lem:wreath}
Let $K_1$ and $K_2$ be number fields of degrees $n:=[K_1:\mathbb{Q}]$ and $m:=[K_2:\mathbb{Q}]$, such that $m$ divides $n$ and the Galois closure of $K_2$ is contained in the one of $K_1$. 
For $i\in \{1,2\}$, let $G_i$ be the Galois group of the Galois closure of $K_i/\mathbb{Q}$ (viewed in its induced degree-$[K_i:\mathbb{Q}]$ action), and let $\pi: G_1\to G_2$ be the restriction to the Galois closure of $K_2$. 
 Then $K_2$ is locally sub-$K_1$ if and only if the following holds: 
 For all $g\in G_1$, there exists an element $\sigma$ in the imprimitive wreath product $S_{k}\wr G_2\le S_{n}$ (with $k:=n/m$) whose cycle structure is the same as that of $g$, whereas the cycle structure of its projection onto $G_2$ (via action of the wreath product on blocks) is the same as that of $\pi(g)$.
\end{lemma}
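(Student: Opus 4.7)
The plan is to unpack both sides of the claimed equivalence into the same combinatorial statement about cycle structures in terms of concatenation, and then bridge ``for all unramified primes $p$'' with ``for all $g \in G_1$'' by means of Chebotarev's density theorem. The essential computational input is a precise description of cycle structures of elements in the wreath product $S_k \wr G_2$.

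First, I would analyze the cycle structure of an arbitrary element $\sigma = (\tau_1, \ldots, \tau_m; \rho) \in S_k \wr G_2$ acting on the $n = mk$ points arranged as $m$ blocks of size $k$ permuted by $\rho$. If $\rho$ has cycles $C_1, \ldots, C_d$ of lengths $c_1, \ldots, c_d$, then for each $C_i$ the product of the associated $\tau_j$'s taken in the cyclic order of $C_i$ yields a permutation $\tau^{(i)} \in S_k$, whose cycle type $(e_{i,1}, \ldots, e_{i,r_i})$ determines the $\sigma$-cycles on the union of blocks indexed by $C_i$: these cycles have lengths $c_i\cdot e_{i,1}, \ldots, c_i\cdot e_{i,r_i}$. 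This is precisely the concatenation operation defined in the paragraph preceding the lemma. Moreover, as $\rho$ ranges over $G_2$ and the $\tau^{(i)}$ range independently over $S_k$ (one per cycle of $\rho$), every such combination is realized by some $\sigma$; e.g., one can set a single distinguished $\tau_j$ per cycle to the desired element of $S_k$ and all the remaining $\tau_j$'s to the identity.

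Consequently, the conclusion of the lemma is equivalent to the following purely combinatorial statement: for every $g \in G_1$, the cycle type of $g$ (in its degree-$n$ action) equals the concatenation of the cycle type of $\pi(g)$ (in its degree-$m$ action) with some tuple $\underline{\mu} = (\mu_1, \ldots, \mu_d)$ of cycle types in $S_k$. Now, for any prime $p$ unramified in the Galois closure of $K_1/\mathbb{Q}$, a Frobenius $g \in G_1$ has cycle type (in the degree-$n$ action) recording exactly the residue degrees of primes of $K_1$ above $p$; likewise $\pi(g) \in G_2$ records the residue degrees in $K_2$. Thus the definition of $K_2$ being locally sub-$K_1$ translates verbatim into the above combinatorial statement, restricted to those $g$ which are Frobenii. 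Chebotarev's density theorem supplies the final ingredient: every conjugacy class of $G_1$ arises as the Frobenius class of some prime unramified in the Galois closure of $K_1$ (and such primes are automatically unramified in $K_2$, since the Galois closure of $K_2$ is contained in that of $K_1$). So restricting to Frobenius elements imposes no loss of generality, and the equivalence follows in both directions.

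The main bookkeeping obstacle is step one, the wreath-product cycle computation, since the ``cycle product'' $\tau^{(i)}$ along a base cycle depends on a choice of starting point and is only well-defined up to conjugacy in $S_k$; however, only its cycle type enters the description, and this is invariant under such a choice. Once this calculation is dispatched, the rest of the argument is a straightforward dictionary translation between splitting of primes and cycle structures of Frobenius elements, combined with a single application of Chebotarev's theorem.
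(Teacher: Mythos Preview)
Your proposal is correct and follows essentially the same route as the paper: identify the cycle structures in $S_k\wr G_2$ as precisely the concatenations of a cycle type in $G_2$ with a tuple of cycle types in $S_k$, then pass between ``all unramified primes'' and ``all $g\in G_1$'' via a density theorem. The only notable difference is that the paper invokes the weaker Frobenius density theorem (sufficient here, since only cycle types matter) rather than the full Chebotarev theorem, and states the wreath-product cycle description as a fact rather than deriving it as you do.
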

\begin{proof}
Note that the cycle structures in the wreath product $S_k\wr G_2$ are exactly the concatenations of $\lambda$ and $(\mu_1,\dots, \mu_d)$ where $\lambda$ is a cycle type (consisting of $d\ge 1$ cycles) in $G_2$ and $\mu_1,\dots, \mu_d$ are cycle types in $S_k$. Thus the implication ``$\Leftarrow$" is immediate from the preceding. The implication ``$\Rightarrow$" follows in the same way, up to noting that, by the Frobenius density theorem (a weaker version of the Chebotarev density theorem), every cycle type in the Galois group occurs as the cycle type of Frobenius at infinitely many primes.
\end{proof}

One reason for the relevance of the notion of arithmetic equivalence is that arithmetically equivalent fields have the same zeta function. Below, we translate the more general notion of being ``locally sub-$K_1$" into a property of zeta functions. Recall that the (Dedekind) zeta function of a number field $K$ has an Euler product $\zeta_K(s) = \prod_{\mathfrak{p}} \frac{1}{1-{N_{K/\mathbb{Q}}(\mathfrak{p})^{-s}}}$, with the product being over all prime ideals $\mathfrak{p}$ of $O_K$. For a finite set $\mathcal{S}$ of rational primes, by the {\it contribution at $\mathcal{S}$ to $\zeta_K$}, we mean the product of all terms corresponding to primes extending some prime in $\mathcal{S}$. Of course, the contribution at unramified rational primes is completely determined by the cycle structure of their Frobenius.

\begin{lemma}
\label{lem:zeta}
The following are equivalent for number fields $K_1, K_2$:
\begin{itemize}
\item[1)] $K_2$ is {locally sub-}$K_1$.
\item[2)] 
For every finite set $\mathcal{S}$ of prime numbers unramified in $K_1$ and in $K_2$, 
there exists an extension $F:=F_{\mathcal{S}}$ of $K_2$ such that the contributions at $\mathcal{S}$ to the zeta functions of $F$ and of $K_1$ are the same.
\end{itemize}
\end{lemma}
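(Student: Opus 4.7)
The plan is to prove each direction separately, exploiting the basic fact that the Euler factor of a Dedekind zeta function at an unramified prime $p$ determines, and is determined by, the multiset of residue degrees of primes above $p$.

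Direction $(1)\Rightarrow(2)$: Fix $\mathcal{S}$. For each $p\in\mathcal{S}$, the hypothesis that $K_2$ is locally sub-$K_1$ furnishes, for each prime $\mathfrak{p}_i$ of $K_2$ above $p$ (of residue degree $c_i$), positive integers $e_{i,1},\dots,e_{i,r_i}$ with $\sum_j e_{i,j}=n/m$ such that the multiset $\{c_i e_{i,j}\}_{i,j}$ agrees with the multiset of residue degrees of primes of $K_1$ above $p$. I would produce a single extension $F/K_2$ of degree $n/m$, unramified above $\mathcal{S}$, realizing exactly these residue degrees above each $\mathfrak{p}_i$. Concretely, take $F=K_2(\alpha)$ for $\alpha$ a root of a monic polynomial $f\in\mathcal{O}_{K_2}[X]$ of degree $n/m$ which, modulo each $\mathfrak{p}_i$ (for $\mathfrak{p}_i$ above some $p\in\mathcal{S}$), factors as a separable product of distinct monic irreducibles of degrees $e_{i,1},\dots,e_{i,r_i}$. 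Such $f$ can be assembled via the Chinese Remainder Theorem from the prescribed local models, and its global irreducibility (so that $F$ is a field, not merely an étale algebra) can be secured by imposing an auxiliary local irreducibility condition at some prime $\mathfrak{q}\notin\mathcal{S}$. By construction, the Euler factor of $F$ at each $p\in\mathcal{S}$ matches that of $K_1$.

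Direction $(2)\Rightarrow(1)$: Fix any prime $p$ unramified in $K_1$ and $K_2$, and apply $(2)$ with $\mathcal{S}=\{p\}$ to obtain $F\supseteq K_2$ whose Euler factor at $p$ agrees with that of $K_1$. By unique factorization of rational functions in $p^{-s}$, the multisets of residue degrees above $p$ in $F$ and in $K_1$ must coincide; since $K_1$ is unramified at $p$, their common sum is $n$. Taking $F$ of the natural minimal degree $[F:\mathbb{Q}]=n$ (as the forward direction itself produces), the identity $\sum e(\mathfrak{P}/p)f(\mathfrak{P}/p)=[F:\mathbb{Q}]$ forces $F/\mathbb{Q}$ to be unramified at $p$, so $F/K_2$ is unramified above each $\mathfrak{p}_i$ of $K_2$ with $[F:K_2]=n/m$. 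Setting $e_{i,j}$ to be the residue degrees of primes of $F$ above $\mathfrak{p}_i$ then yields positive integers with $\sum_j e_{i,j}=n/m$ whose products $c_i e_{i,j}$ exhaust the multiset of residue degrees of $K_1$ above $p$, verifying the definition of locally sub-$K_1$ at $p$. Since $p$ was arbitrary, $K_2$ is locally sub-$K_1$.

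The principal obstacle is the polynomial construction in direction $(1)\Rightarrow(2)$: patching prescribed local factorization types via CRT while simultaneously securing global irreducibility of the interpolating polynomial. Each ingredient is classical, but the combined construction requires careful bookkeeping.
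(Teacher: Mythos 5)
Your strategy for $(1)\Rightarrow(2)$ is in essence the paper's: for each $p\in\mathcal{S}$, realize over $K_2$ a degree-$n/m$ extension whose relative splitting above each $\mathfrak{p}_i$ is the prescribed $(e_{i,1},\dots,e_{i,r_i})$, so that the residue degrees over $\mathbb{Q}$ become the $c_ie_{i,j}$ and the Euler factors match. The paper obtains such an $F$ as a black box from Saltman's theorem on generic $S_{n/m}$-extensions with prescribed local behaviour; you try to build it by hand, and this is where the gap lies. You cannot in general choose $f$ so that its reduction modulo $\mathfrak{p}_i$ is a \emph{separable} product of \emph{distinct} monic irreducibles of degrees $e_{i,1},\dots,e_{i,r_i}$: if some degree $e$ occurs among the $e_{i,j}$ with multiplicity exceeding the number of monic irreducibles of degree $e$ over the residue field $\mathbb{F}_{p^{c_i}}$, no such squarefree reduction exists. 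The simplest failure is three relative degree-$1$ primes above a $\mathfrak{p}_i$ with residue field $\mathbb{F}_2$ (only two linear polynomials are available); this is precisely the classical common-index-divisor phenomenon, exemplified by Dedekind's cubic $x^3-x^2-2x-8$, in which $2$ splits completely although no generating polynomial splits completely modulo $2$. The desired splitting is still realizable, but not by prescribing $f\bmod\mathfrak{p}_i$: one must instead prescribe the completed algebras $F\otimes_{K_2}(K_2)_{\mathfrak{p}_i}$ as products of unramified extensions of degrees $e_{i,j}$ and descend via weak approximation and Krasner's lemma, or invoke an existence theorem with prescribed decomposition groups, as the paper does.

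There is also a logical slip in your $(2)\Rightarrow(1)$ (a direction the paper declares obvious). You write that one may take ``$F$ of the natural minimal degree $[F:\mathbb{Q}]=n$, as the forward direction itself produces,'' but in proving the converse the field $F$ is handed to you by hypothesis $2)$, which fixes neither its degree nor its ramification above $p$; you cannot import properties of the $F$ constructed in the other direction. Matching Euler factors only yields equality of the multisets of residue degrees of $F$ and of $K_1$ above $p$. To recover the balance condition $\sum_j e_{i,j}=n/m$ in the definition of ``locally sub-$K_1$'' you need $F/K_2$ to be unramified above $p$, so that $\sum_{\mathfrak{P}\mid\mathfrak{p}_i}f(\mathfrak{P}/\mathfrak{p}_i)=[F:K_2]$ is the same for every $i$; if $F$ is allowed to ramify above $p$, the residue degrees can distribute unevenly over the $\mathfrak{p}_i$ and the argument as written does not close. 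This step needs a genuine additional argument (or a normalization of which $F$ are permitted in condition $2)$) rather than the appeal you make.
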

\begin{proof}
2)$\Rightarrow$ 1) is obvious. 
For the converse, one may use the following: since the symmetric groups have generic Galois extensions over all number fields, it is possible (e.g., as a special case of a result by Saltman \cite[Theorem 5.9]{Saltman}), given any finite collection of primes $\mathfrak{p}$ of $K_2$ and for each such $\mathfrak{p}$ a cycle type in $S_n$ with $n:=\frac{[K_1:\mathbb{Q}]}{[K_2:\mathbb{Q}]}$, to find an $S_n$-extension $F\supseteq K_2$ having Frobenius class given by the prescribed cycle type at all those primes $\mathfrak{p}$. Concretely, choose as the finite set of primes $\mathfrak{p}$ of $K_2$ exactly those extending the given rational primes $p\in \mathcal{S}$ in $K_2$, and choose the cycle types such that, for each such $p\in \mathcal{S}$, the  cycle type of the Frobenius at $p$ in $K_1/\mathbb{Q}$ and in $F/\mathbb{Q}$ is the same (this is possible via suitable concatenation of cycle types, by Lemma \ref{lem:wreath}). Then $F$ achieves the claim of 2) for the given finite set of prime numbers.
\end{proof}

In general, it is indeed necessary to exclude the ramified primes from the characterizing condition 2) above; for this, we refer to Example \ref{ex:nonstrong}

Of course, if in Lemma \ref{lem:zeta}, the fields $F$ did {\it not} depend on the chosen set $\mathcal{S}$ of primes, one would indeed have arithmetic equivalence between $F$ and $K_1$. It is therefore natural to search for examples of fake subfields that do not arise from arithmetic equivalence; in the context of Lemma \ref{lem:zeta}, this may then be seen as the failure of a local-global principle (namely, for the zeta functions). 
The following is an elementary, but useful criterion to find or exclude candidates for this.

\begin{lemma}
\label{lem:intersect}
Assume that $K$ is a fake subfield of some number field $F$. Let $L$ be the Galois closure of $K/\mathbb{Q}$ and $G=\textrm{Gal}(L/\mathbb{Q})$ (viewed in its induced degree $[K:\mathbb{Q}]$ action). Finally, let $U:=\textrm{Gal}(L/L\cap F)\le G$. 
 Then the following hold. 
\begin{itemize}
\item[1)] $U$ does not fix a point, but
\item[2)] every element of $U$ has at least one fixed point.
\end{itemize}
\end{lemma}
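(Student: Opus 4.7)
I will translate both claims into assertions about the natural coset action of $G$ on $G/H$, where $H:=\Gal(L/K)$, and exploit the Galois-theoretic link between $L$, $F$ and their intersection $L\cap F$.

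For part~1), recall that a point $gH$ is fixed by $U$ if and only if $U\subseteq gHg^{-1}$. I would first verify by a direct computation (using $L^H=K$) that $L^{gHg^{-1}}=g(K)$. Thus, if such a $g$ existed, we would obtain
\[
g(K)=L^{gHg^{-1}}\subseteq L^U=L\cap F\subseteq F.
\]
Extending $g\in G$ to $\hat g\in\Gal(\overline{\mathbb{Q}}/\mathbb{Q})$ then yields $K\subseteq F^{\hat g^{-1}}$, contradicting the hypothesis that $K$ is a fake (and hence not actual) subfield of $F$.

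For part~2), let $\widetilde{F}$ denote the Galois closure of $F/\mathbb{Q}$. Lemma~\ref{lem:firstobs} guarantees $L\subseteq\widetilde{F}$, so the restriction $\pi\colon\Gal(\widetilde{F}/\mathbb{Q})\to G$ is well-defined, and by standard Galois theory it sends $V:=\Gal(\widetilde{F}/F)$ onto $U$ (the fixed field of $\pi(V)$ in $L$ being $L\cap F$). Given $u\in U$, I pick a lift $\widetilde u\in V$; in the degree-$[F:\mathbb{Q}]$ coset action of $\Gal(\widetilde{F}/\mathbb{Q})$ the element $\widetilde u$ fixes the coset $V$, hence has at least one fixed point.

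Next, I apply Lemma~\ref{lem:wreath}: with $k:=[F:\mathbb{Q}]/[K:\mathbb{Q}]$, there exists $\sigma\in S_k\wr G\le S_{[F:\mathbb{Q}]}$ whose cycle type matches that of $\widetilde u$ and whose projection onto $G$ has the same cycle type as $u$. A direct cycle-structure analysis in the wreath product shows that every cycle of $\sigma$ has length $c\cdot\ell$, where $c$ is a cycle length of its $G$-projection and $\ell$ is a cycle length of a certain product of within-block components; in particular, a fixed point of $\sigma$ forces $c=\ell=1$, so the $G$-projection must also have a fixed point. Applied to the $\sigma$ just produced, and using that its $G$-projection and $u$ share a cycle type, this gives the desired fixed point of $u$ in the degree-$[K:\mathbb{Q}]$ action. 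The main delicate step is this last wreath-product cycle-count; the other ingredients are essentially the standard dictionary between Galois theory and coset actions.
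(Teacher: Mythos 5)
Your proof is correct and follows essentially the same route as the paper: part 1) by the Galois correspondence (a fixed point of $U$ would place a conjugate of $K$ inside $L\cap F\subseteq F$), and part 2) by lifting $u$ to $V:=\Gal(\widetilde F/F)$, using that the lift fixes $F$ and that a $1$-cycle in a concatenated cycle type forces a $1$-cycle in the base type. The paper compresses the wreath-product cycle count into one sentence, while you route it explicitly through Lemma \ref{lem:wreath}, but the argument is the same.
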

\begin{proof}
Let $\Omega$ be the Galois closure of $F/\mathbb{Q}$, $\Gamma:=\textrm{Gal}(\Omega/\mathbb{Q})$ and $V:=\textrm{Gal}(\Omega/F)$. Due to Lemma \ref{lem:firstobs}) we have $L\subseteq \Omega$, and hence $U$ equals the restriction of $V$ to $L$. Since every $\sigma\in V$ has a fixed point in the degree-$[F:\mathbb{Q}]$ action of $\Gamma$, the fact that $K$ is {locally sub-}$F$ forces $\sigma$ to also fix a conjugate of $K$, i.e., every element of $U\le G$ has a fixed point. On the other hand, $U$ itself cannot fix a point, or otherwise, up to algebraic conjugates, the fixed field of $U$, and hence a fortiori $F$, would contain $K$, contradicting the notion of ``fake subfield". \end{proof}

Note that permutation groups $U\le S_n$ possessing no fixed point, but in which every element has a fixed points, have been considered in the context of ``intersective polynomials", i.e., polynomials having a root in (almost) every $\mathbb{Q}_p$, but not in $\mathbb{Q}$ itself. For essentially the same reason, they feature prominently in the context of ``fake values" of morphisms studied in \cite{Corv}.

\begin{corollary}
If $K/\mathbb{Q}$ is a Galois extension, then $K$ is not a fake subfield of any number field.
\end{corollary}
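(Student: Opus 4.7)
The plan is to apply Lemma \ref{lem:intersect} directly, exploiting the fact that the degree-$[K:\mathbb{Q}]$ action of a Galois group is the regular action. Suppose for contradiction that $K$ is a fake subfield of some number field $F$, and let $L$ be the Galois closure of $K/\mathbb{Q}$, $G = \Gal(L/\mathbb{Q})$, and $U = \Gal(L/L \cap F)$.

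Since $K/\mathbb{Q}$ is Galois, we have $L = K$, and the degree-$[K:\mathbb{Q}]$ action of $G$ on the cosets of $\Gal(L/K) = \{1\}$ is the regular action of $G$ on itself. In the regular action, the only element possessing a fixed point is the identity. Hence condition 2) of Lemma \ref{lem:intersect}, which requires every element of $U$ to have a fixed point, forces $U = \{1\}$.

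But then $U$ fixes every point of the action, directly contradicting condition 1) of Lemma \ref{lem:intersect}. This contradiction shows that no such $F$ exists, completing the proof. The argument is short and the only subtle point is recognizing that the relevant permutation action on $G$ here is the regular one; once this is observed, the conclusion is immediate from the two incompatible conditions on $U$.
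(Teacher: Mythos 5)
Your proof is correct and follows exactly the paper's argument: apply Lemma \ref{lem:intersect} and observe that in the regular action no non-identity element has a fixed point, forcing $U=\{1\}$ and contradicting condition 1). The paper states this more tersely, but the reasoning is identical.
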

\begin{proof}
This follows directly from the previous lemma, upon noting that no non-identity element in the regular permutation action of a group has a fixed point.
\end{proof}

\section{Extensions of small degree}
\label{sec:small}

We begin our investigation of concrete examples of fake subfields by considering number fields of small degree. 

\begin{corollary}
If $[K:\mathbb{Q}]\le 4$, then $K$ cannot be a fake subfield of any number field.
\end{corollary}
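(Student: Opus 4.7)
The plan is to apply Lemma \ref{lem:intersect} directly. Suppose, for contradiction, that $K$ with $n:=[K:\mathbb{Q}]\le 4$ were a fake subfield of some number field $F$. Let $L$ be the Galois closure of $K/\mathbb{Q}$, $G:=\Gal(L/\mathbb{Q})$ in its natural degree-$n$ permutation action (so $G\le S_n$ is transitive), and $U:=\Gal(L/L\cap F)$. By Lemma \ref{lem:intersect}, $U\le G\le S_n$ must satisfy (i) $U$ has no global fixed point on the $n$ points, yet (ii) every element of $U$ has at least one fixed point. My task reduces to showing no subgroup $U\le S_n$ with these two properties exists when $n\le 4$.

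The main tool is Jordan's theorem: every transitive permutation group of degree $\ge 2$ contains a derangement. (This is immediate from the Cauchy--Frobenius orbit-counting identity, since $\sum_{u\in U}\mathrm{fix}(u)=|U|$ for a transitive $U$, so some element must have $0$ fixed points.) First I would observe that (i) forces every $U$-orbit on $\{1,\dots,n\}$ to have length $\ge 2$. For $n\le 3$ this already forces $U$ to be transitive on all $n$ points, so Jordan's theorem contradicts (ii) at once.

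For $n=4$ one has two possible orbit shapes compatible with (i): either $U$ is transitive on all $4$ points (again killed by Jordan), or $U$ has orbit shape $2+2$. In the latter case, after a suitable labelling of the orbits as $\{1,2\}$ and $\{3,4\}$, $U$ is contained in the stabilizer of this (ordered) partition, which is the Klein four group $V=\{e,(12),(34),(12)(34)\}$; transitivity of $U$ on each $2$-orbit then forces $U$ to contain an element sending $1$ to $2$ and one sending $3$ to $4$. A one-line inspection of $V$ shows that this either places the derangement $(12)(34)$ directly in $U$, or else places both $(12)$ and $(34)$ in $U$, in which case their product $(12)(34)\in U$ is still a derangement; either way (ii) fails.

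The only real obstacle is the clean group-theoretic translation provided by Lemma \ref{lem:intersect}; once that is in place, Jordan's theorem plus the short check in the imprimitive $2+2$ case for degree $4$ finishes the proof. No deeper input is needed.
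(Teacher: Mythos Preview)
Your proof is correct and follows essentially the same approach as the paper: both invoke Lemma~\ref{lem:intersect} and then verify that no transitive group of degree $\le 4$ contains a fixed-point-free subgroup in which every element has a fixed point. The paper simply asserts this verification (noting the link to intersective polynomials), whereas you spell it out explicitly via Jordan's theorem and the short $2+2$ case analysis, which is a perfectly clean way to do it.
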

\begin{proof}
One verifies directly that no transitive group of degree $\le 4$ has a fixed point free subgroup in which every element fixes a point (this also reflects the well-known fact that there are no non-trivially intersective polynomials of degree less than $5$). The assertion thus follows from Lemma \ref{lem:intersect}. 
\end{proof}

\subsection{Quintic fields}
For quintic fields $K/\mathbb{Q}$, the question of whether $K$ is a fake subfield of some number field is completely resolved by the solvability or non-solvability of the Galois group. 
\begin{theorem}
\label{thm:quintic}
\begin{itemize}
\item[a)]
Every $S_5$ and every $A_5$-quintic field occurs as the fake subfield of some number field, in a way not induced by arithmetic equivalence.
\item[b)] No solvable quintic field occurs as the fake subfield of a number field.
\end{itemize}
\end{theorem}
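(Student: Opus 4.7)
The plan is to treat the two parts by complementary methods. Part (b) is a pure group-theoretic obstruction coming from Lemma \ref{lem:intersect}, while part (a) requires an explicit construction in the $A_5$ case (the $S_5$ case is immediate from Theorem \ref{thm:main}(a) specialized to $p=3$, since $S_{p+2}=S_5$).

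\textbf{Part (b).} The solvable transitive subgroups of $S_5$ are $C_5$, $D_5$, and the Frobenius group $F_{20}$ of order $20$, so by Lemma \ref{lem:intersect} it suffices to rule out, in each of these groups $G$, a subgroup $U\le G$ which has no common fixed point on $\{1,\dots,5\}$ although every element of $U$ fixes at least one point. In each case the set of non-identity elements having a fixed point is exactly the union of the five point stabilizers (of orders $1$, $2$, $4$ respectively). A direct computation shows that the product of two such elements lying in distinct stabilizers is a non-identity $5$-cycle in $D_5$ or a non-trivial translation in $F_{20}$, hence has no fixed point. Therefore the non-identity elements of $U$ must all lie in a single point stabilizer, so $U$ itself fixes that point---a contradiction.

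\textbf{Part (a), $A_5$ case.} Let $K$ be an $A_5$-quintic with Galois closure $L$. One easily checks that up to conjugacy, the only subgroup of $A_5$ satisfying the hypotheses of Lemma \ref{lem:intersect} is the copy $S_3\le A_5$ stabilizing a $2$-subset of $\{1,\dots,5\}$; hence $L\cap F$ must equal the degree-$10$ pair field $E:=L^{S_3}$. The naive guess $F=E$ fails the wreath criterion of Lemma \ref{lem:wreath}: a $3$-cycle of $A_5$ has cycle type $(3,3,3,1)$ on the $10$ pairs, which is not a concatenation of $(3,1,1)$ with fiber size $k=2$. My remedy is to enlarge the fiber by twisting $E$ with an auxiliary cyclic cubic extension $M/\mathbb{Q}$ linearly disjoint from $L$. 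Set $F:=E\cdot M$; then $[F:\mathbb{Q}]=30$, $\Omega:=LM$ has Galois group $\Gamma:=A_5\times C_3$, and $V:=\Gal(\Omega/F)=S_3\times 1$.

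The main obstacle is then verifying the wreath condition for all $15$ conjugacy classes of $\Gamma$. This is uniform thanks to the product structure: $\Gamma$ acts on $\Gamma/V\cong(A_5/S_3)\times C_3$ as a product action, so for $(g,c)\in A_5\times C_3$ each cycle of length $\ell$ of $g$ on the $10$ pairs contributes $\gcd(\ell,\mathrm{ord}(c))$ cycles of length $\mathrm{lcm}(\ell,\mathrm{ord}(c))$ to the cycle type on $\Gamma/V$. The enlarged fiber $k=6$ then gives enough flexibility to match every resulting cycle type with a concatenation of the corresponding degree-$5$ cycle type; for instance the triplicated pair-type $(3^9,1^3)$ of $((123),0)$ is realized as a $3$-cycle contribution $3^6$ (fiber $(1^6)\in S_6$) together with fixed-point fibers $(3,1,1,1)$ and $(3,3)\in S_6$. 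Finally, $K\not\subseteq F^\sigma$ for any $\sigma$ because $A_4$ has no subgroup of order $6$; and the same fact implies that the only order-$6$ subgroup of $A_4\times C_3=\Gal(\Omega/K)$ is $C_2\times C_3$. A Gassmann count via Proposition \ref{prop:gassmann}---the conjugacy class of a $3$-cycle of $A_5$ meets $V=S_3\times 1$ in $2$ elements but $C_2\times C_3$ in none---shows that $V$ and $C_2\times C_3$ are not Gassmann-equivalent in $\Gamma$, so no field arithmetically equivalent to $F$ contains $K$, establishing that the relation is not induced by arithmetic equivalence.
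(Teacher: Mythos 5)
Your $A_5$ construction coincides with the paper's: the same group $\Gamma=A_5\times C_3$ with $V=S_3\times 1$ of index $30$, the same identification of the $3$-cycle (with pair-action type $(3^3.1)$) as the sole obstruction inside the Galois closure itself, and the same resolution via the types $(3^9.1^3)$ and $(3^{10})$. The two problems lie elsewhere.

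First, your treatment of the $S_5$ case is circular. The paper proves Theorem \ref{thm:main}(a) as Theorem \ref{thm:symm}, and case (iii) of that proof explicitly reduces to $p>3$ ``due to Theorem \ref{thm:quintic}''; in other words, the $p=3$ instance of Theorem \ref{thm:main}(a) is deduced from the very statement you are proving. A direct argument is needed: take $U=\langle (1,2,3),(2,3)(4,5)\rangle\cong S_3\le S_5$ (orbits of lengths $3$ and $2$), check that all six pairs of cycle types in the degree-$5$ and degree-$20$ coset actions are compatible, and rule out arithmetic equivalence by showing that no overgroup of $U$ of order dividing $24$ has a nontrivial Gassmann partner in $S_5$.

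Second, in part (b) your computation for $F_{20}\cong AGL_1(5)$ is wrong: the product of two non-identity elements with fixed points lying in distinct point stabilizers need \emph{not} be a translation. For instance $y\mapsto 2y$ fixes $0$ and $y\mapsto 2y-1$ fixes $1$, but their composite $y\mapsto 4y-1$ fixes $2$. So your deduction that all non-identity elements of $U$ lie in a single stabilizer does not follow as stated. The repair is the Frobenius-group argument of Lemma \ref{lem:solv_p}: since nontrivial translations are fixed-point-free, $U$ meets the Frobenius kernel $C_5$ trivially, hence is a complement in $UC_5$ and so conjugate into a point stabilizer. (Your argument is fine for $D_5$, and $C_5$ is immediate.) Finally, a smaller point: in the $A_5$ case, showing $V$ is not Gassmann-equivalent to $C_2\times C_3$ only excludes a one-step equivalence; to exclude iterated subfield/equivalence chains one should also note that the proper overgroups of $S_3\times 1$ in $\Gamma$ have orders $18$, $60$, $180$ while $A_4\times C_3$ has no subgroup of order $18$, or, as the paper does, that the only nontrivial Gassmann pairs in $\Gamma$ occur at order $12$ and both members project onto a point stabilizer $A_4\le A_5$.
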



\begin{proof}
Since b) can again be derived straightaway by inspection using Lemma \ref{lem:intersect} (or alternatively, from Lemma \ref{lem:solv_p}), it suffices to prove a).

First, let $K$ be an $S_5$ quintic number field, $\Omega$ the Galois closure of $K/\mathbb{Q}$, and let $U:=\langle (1,2,3), (2,3)(4,5)\rangle (\cong S_3) \le S_5$.
We claim that a) $K$ is a fake subfield of the fixed field $\Omega^U$ of $U$, and b) this relation is not induced by arithmetic equivalence.
Indeed, the pairs of cycle structures of non-identity elements of $S_5$ in the two coset actions are exactly $((2.1^3), (2^{10}))$, $((2^2.1), (2^8.1^4))$, $((3.1^2), (3^6.1^2))$, $((4.1), (4^4.2^2))$, $((5), (5^4))$ and $((3.2), (6^3.2))$, all of which are compatible with $K$ being a fake subfield of $\Omega^U$.

Furthermore, the overgroups of $U$ of order dividing $24 = |\textrm{Gal}(\Omega/K)|$ are exactly $U$ itself and $S_3\times S_2$ (the full 2-set stabilizer).
None of these have Gassmann equivalent subgroups inside $S_5$; for example, the only other subgroup (up to conjugacy) of order $12$ other than $S_3\times S_2$ is $A_4$, which cannot be equivalent to the $2$-set stabilizer, since it contains no transposition; and the only other subgroups of order $6$ other than $U$ are $\langle (1,2,3), (1,2)\rangle\cong S_3$ and $\langle (1,2,3)(4,5)\rangle \cong C_6$; neither can be equivalent to $U$, since they contain no double transposition. In other words, there is no way to ``jump" from $U$ to $\textrm{Gal}(\Omega/K) \cong S_4$ via containment and arithmetic equivalence.

Next, let $K$ be an $A_5$-quintic field. Now the analog of the above construction with $U=\langle (1,2,3), (2,3)(4,5)\rangle \le A_5$ does not quite work; notably, in the (degree-$10$) action of $A_5$ on cosets of $U$, the 3-cycle has cycle structure $(3^3.1)$, which is not compatible with $(3.1^2)$. Instead, the following succeeds. 

Let $\Gamma = A_5\times C_3$, and consider the order $6$ subgroup $U=\langle (1,2,3), (2,3)(4,5)\rangle \le A_5$ (i.e., $[\Gamma:U]=30$). This is clearly not contained in an index-$5$ subgroup $V\cong A_4\times C_3$ of $\Gamma$.
Since elements of cycle structure $(3.1^2)$ did indeed constitute the {\it only} obstruction to the fixed field of $V$ being a fake subfield of the fixed field of $U\times C_3$, it suffices to verify that all preimages of such elements in $\Gamma$ no longer constitute an obstruction to the fixed field of $V$ being a fake subfield of the fixed field of $U$.  Those preimages either have cycle structure $(3^9.1^3)$ (namely, elements of order $3$ inside $A_5$, and hence contained in some conjugate of $U$) or $(3^{10})$ (namely,  elements of the form $(\sigma, \tau)$, where $\sigma\in A_5$ and $\tau \in C_3$ are each of order $3$; these are not contained in any conjugate of $U$). Clearly, there is no obstruction arising from either of these cycle structures together with the ``small" cycle structure $(3.1^2)$.
Also, once again, it is easy to verify that the ``fake subfield" relation is not induced by arithmetic equivalence (in fact, the only non-trivially Gassmann equivalent subgroups inside $\Gamma$ are pairs of subgroups of order $12$ both projecting to a point stabilizer $A_4$ inside $A_5$, i.e., arithmetic equivalence cannot be used to jump from an overfield of a degree-$5$ field to a field containing no such subfield).
\end{proof}

\subsection{Sextic fields}

\begin{theorem}
\label{thm:sextic}
\begin{itemize}
\item[a)] Every sextic number field whose Galois closure has Galois group $S_6$ or $A_6$ is a fake subfield of some number field, but this relation is always induced by arithmetic equivalence.
\item[b)]
Every sextic number field whose Galois closure has Galois group isomorphic to $A_4$  occurs as a fake subfield of some degree-$12$ number field, in a way not induced by arithmetic equivalence. 
\end{itemize}\end{theorem}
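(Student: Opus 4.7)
My plan is to treat the two parts using the group-theoretic machinery of Lemmas \ref{lem:intersect} and \ref{lem:wreath}: in each case I construct $F$ explicitly and then analyse whether the resulting fake subfield relation is induced by arithmetic equivalence.

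For part a), I would work inside $G \in \{S_6, A_6\}$ in the natural $6$-point action and exhibit the Gassmann-equivalent pair of Klein 4-subgroups $V = \langle (1,2)(3,4), (1,3)(2,4)\rangle$ and $U = \langle (1,2)(3,4), (3,4)(5,6)\rangle$ (both contained in $A_6$); each consists of the identity together with three double transpositions of cycle type $(2,2,1,1)$, so they meet every conjugacy class of $G$ in the same number of elements. By Proposition \ref{prop:gassmann}, the fields $L^U$ and $L^V$ are arithmetically equivalent; since $V \leq G_5 \cap G_6$, $L^V$ contains a conjugate of $K$, whereas $U$ fixes no point so $L^U$ does not. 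This yields the fake subfield relation and shows that it is induced by arithmetic equivalence. For the converse direction, Lemma \ref{lem:intersect} forces $U := \textrm{Gal}(L/L\cap F) \leq G$ to be fixed-point-free yet with every element fixing a point. Jordan's theorem on derangements rules out the transitive case, leaving orbit patterns $(4,2)$, $(3,3)$, $(2,2,2)$ and thus only finitely many candidates up to conjugacy in $S_6$ and $A_6$. A direct enumeration (easily automated in Magma) should show that each such $U$ is Gassmann-equivalent in $G$ to a subgroup contained in some point stabilizer, forcing any fake subfield relation to be induced by arithmetic equivalence.

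For part b), inspection shows that the only fixed-point-free subgroup of $G = A_4$ (in its degree-$6$ action on $2$-subsets of $\{1,2,3,4\}$) in which every element fixes a point is the normal Klein 4-group $V_4 \trianglelefteq A_4$: its three double transpositions each fix two of the six points, but no point is fixed by all three. To realize $K$ as a fake subfield of a degree-$12$ field I would pick a $V_4$-Galois extension $M/\mathbb{Q}$ linearly disjoint from $L$ and an isomorphism $\phi : V_4 \xrightarrow{\sim} \textrm{Gal}(M/\mathbb{Q})$; setting $\Omega = LM$ and $\Gamma = \textrm{Gal}(\Omega/\mathbb{Q}) \cong A_4 \times V_4$, the graph $V := \{(v, \phi(v)) : v \in V_4\}$ has index $12$ in $\Gamma$ and I take $F := \Omega^V$. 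Verification that $K$ is locally sub-$F$ reduces, via Lemma \ref{lem:wreath}, to checking for each representative $(g,h) \in \Gamma$ that the cycle structure of $(g,h)$ on $\Gamma/V$ is a valid $k=2$ concatenation of the cycle type of $g$ in the degree-$6$ $A_4$-action; this is a short case-by-case calculation with only three essential cases for $g$ (identity, double transposition, $3$-cycle).

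The main obstacle is ruling out that this relation is induced by arithmetic equivalence. Using that the conjugacy classes of $\Gamma$ are exactly the products $\mathcal{C} \times \{h\}$ of an $A_4$-class $\mathcal{C}$ with a single element $h \in V_4$, I would track the intersection pattern of $V$ with these classes and show that every order-$4$ subgroup of $\Gamma$ Gassmann-equivalent to $V$ is itself the graph of some isomorphism $V_4 \to V_4$; in particular, no such subgroup is contained in any preimage $\pi^{-1}(C_2) = C_2 \times V_4$ of a point stabilizer of the degree-$6$ action of $A_4$, since graph subgroups project onto all of $V_4$ in the first coordinate. This rules out the direct arithmetic-equivalence route $F \sim F^* \supseteq K^\sigma$. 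For longer chains $F \sim F_1 \supseteq F_2 \sim F_3 \supseteq \dots \supseteq K^\sigma$, I would use the observation that the proper overgroups of graph subgroups in $\Gamma$ (other than $\Gamma$ itself) are all contained in the unique Sylow $2$-subgroup $V_4 \times V_4$; enumerating their Gassmann classes and comparing signatures with the target $\pi^{-1}(C_2)$ shows that the chain cannot close up. This finite but delicate case analysis is the most technical part of the argument.
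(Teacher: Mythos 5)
Your part a) is essentially the paper's own argument: the same Gassmann pair $U=\langle (1,2)(3,4),(3,4)(5,6)\rangle$, $V=\langle (1,2)(3,4),(1,3)(2,4)\rangle$ gives existence, and the converse is reduced via Lemma \ref{lem:intersect} to the same finite (machine-checkable) enumeration of fixed-point-free subgroups in which every element has a fixed point.

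For part b) you take a genuinely different route. The paper works inside the fibered wreath product $G=\{((x_1,x_2,x_3),y)\in V_4\wr C_3: x_1x_2x_3=1\}$ of order $48$ (which is \emph{not} isomorphic to your $\Gamma=A_4\times V_4$: the $C_3$ acts fixed-point-freely on the order-$16$ kernel there, whereas it centralizes the second factor in your group), and must then invoke embedding-problem results to show that \emph{every} sextic $A_4$-field embeds into a $G$-extension. Your direct-product construction sidesteps that entirely: choosing $M$ linearly disjoint from $L$ is elementary, and your cycle-type verification for the graph subgroup $V=\{(v,\phi(v))\}$ is correct (the three cases give $(2^6)$, $(2^41^4)$ or $(2^6)$, and $(3^4)$ or $(6^2)$ on the $12$ cosets, all valid concatenations of $(1^6)$, $(2^21^2)$, $(3^2)$ respectively; and $V$ surjects onto $V_4$ under $\pi$, so it lies in no conjugate of $\pi^{-1}(C_2)$). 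This is a real simplification on the existence side.

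The one genuine gap is your argument that the relation is not induced by arithmetic equivalence: you reduce it to an ``enumeration of Gassmann classes of overgroups'' that you do not carry out, and as stated it does not obviously account for chains in which a Gassmann step takes place inside a proper quotient of $\Gamma$ (i.e., after the Galois closure has shrunk). The paper's Corollary \ref{cor:gassmann} closes this immediately and you should use it: in the degree-$6$ action of $\Gamma$ (through $\pi$), your $V$ has image $V_4$ and hence $3$ orbits, so every overgroup of $V$ has at most $3$ orbits and every subgroup Gassmann-equivalent (in $\Gamma$, or in any quotient, since Gassmann equivalence lifts to preimages) to such a group again has at most $3$ orbits; but $\pi^{-1}(C_2)$ has $4$ orbits. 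Since the orbit count is non-increasing along any chain of ``take overgroup / take Gassmann-equivalent subgroup'', no such chain can end at a conjugate of $\pi^{-1}(C_2)$. With that substitution your proof is complete.
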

\begin{proof}
Regarding a), one can verify that the only subgroups $U$ of $S_6$ fulfilling the conditions of Lemma \ref{lem:intersect} are of order $4$, and up to conjugacy equal to $\langle (1,2)(3,4), (3,4)(5,6)\rangle$ (i.e., containing three double involutions, and having three orbits of length $2$). But then, if $F$ is a field having a sextic $S_6$- or $A_6$ number field $K$ as a fake subfield, as in Lemma \ref{lem:intersect}, $F$ has to contain the fixed field of $U$. This, however, is arithmetically equivalent to the fixed field of $\langle (1,2)(3,4), (1,3)(2,4)\rangle$ (by Gassmann's criterion, since this group also contains three double transpositions), but since the latter group fixes two points, its fixed field contains some conjugate of $K$.

Regarding b), denote by $V_4\le S_4$ the Klein $4$-group acting transitively on $4$ points, and by $a,b,c\in V_4$ the double transpositions. Set $G:= \{((x_1,x_2,x_3), y)\in V_4\wr C_3 = (V_4)^3\rtimes C_3\mid x_i\in V_4; y\in C_3; x_1x_2x_3=1\}$, acting as a transitive subgroup of the wreath product $V_4\wr C_3\le S_{12}$. (In Magma's database of transitive groups, cf.\ \cite{Hulpke}, $G$ is the group \texttt{TransitiveGroup}$(12,32)$.) Let $U_1\le G$ be a point stabilizer in this degree-$12$ action, i.e., without loss of generality $U_1=\{((x,x,1)\in G\cap (V_4)^3 \mid x\in V_4\}$. Let $N=\{(1,1,1), (a,b,c), (b,c,a), (c,a,b)\} \subset G\cap (V_4)^3$ be an order-$4$ normal subgroup of $G$ containing three fixed point free involutions. Then $G/N\cong A_4$, and we let $U_2\le G$ be a preimage of an order-$2$ subgroup of this quotient $A_4$ (i.e., $[G:U_2]=6$).  One verifies quickly that the only pairs of cycle structures of non-identity elements of $G$ in the action on cosets of $U_2$ and $U_1$ respectively are $((1^6), (2^6))$, $((2^2.1^2), (2^4.1^4))$, $((2^2.1^2), (2^6))$ and $((3^2), (3^4))$. From this, it is evident that, in any Galois extension of $\mathbb{Q}$ with group $G$, the fixed field of $U_2$ is locally a subfield of the fixed field of $U_1$, and even a fake subfield since $U_2$ contains no conjugate of $U_1$; indeed, otherwise $N$ would have to intersect a stabilizer in the degree-$12$ action in at least a subgroup of order $2$, which is clearly not the case by its definition via fixed point free involutions. Furthermore, this ``fake subfield" relation is not induced by arithmetic equivalence, notably because $U_1\cong V_4$ injects into $G/N$ and thus has three orbits (of length $2$) in the degree-$6$ action, whereas $U_2$ maps to an order-$2$ subgroup of $G/N$ and hence has four orbits,  whence Corollary \ref{cor:gassmann} is applicable. We have therefore obtained that the assertion of b) holds for every sextic $A_4$ number field which embeds into a $G$-extension of $\mathbb{Q}$. That this holds indeed for {\it every} $A_4$ number field follows from classical results on embedding problems (e.g., \cite[Chapter I, Theorem 2.4]{MM}), since $G$ is a semidirect product of $A_4$ and an abelian normal subgroup $C_2\times C_2$. This concludes the proof.
%
\end{proof}

From the observations so far, one also immediately obtains:
\begin{corollary}
The smallest degree $[K:\mathbb{Q}]$ of a number field $K$ possessing a fake subfield not induced by arithmetic equivalence is $12$.
\end{corollary}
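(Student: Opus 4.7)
The bound $12$ is attained by Theorem \ref{thm:sextic}(b), which exhibits an explicit degree-$12$ number field with a sextic $A_4$-fake subfield not induced by arithmetic equivalence. So the plan is to prove the complementary inequality: no ambient field of degree $n := [K:\mathbb{Q}] \le 11$ works.

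A first observation is that if $K_2$ is locally sub-$K_1$ with $[K_2:\mathbb{Q}] = [K_1:\mathbb{Q}]$, then the defining condition forces every $e_{i,j}=1$, so $K_1$ and $K_2$ have identical splitting pattern at all unramified primes and are therefore arithmetically equivalent. Hence, for a fake subfield relation not induced by arithmetic equivalence, $[K_2:\mathbb{Q}]$ must be a \emph{proper} divisor of $[K_1:\mathbb{Q}]$. Combined with the earlier corollary ruling out fake subfields of degree $\le 4$, this immediately disposes of $n \in \{5,6,7,8,9,11\}$, since all proper divisors of such $n$ are at most $4$.

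The only remaining case is $n = 10$, $m := [K_2:\mathbb{Q}] = 5$. Here the plan is to combine Theorem \ref{thm:quintic}(b) with Lemma \ref{lem:intersect}. The former forces the Galois closure $L$ of $K_2$ to have group $G \in \{S_5, A_5\}$; the latter forces $U := \Gal(L/L \cap K_1)$ to be a fixed-point-free subgroup of $G$ (in its natural $5$-point action) in which every element has a fixed point, and additionally $[G:U]$ must divide $[K_1:\mathbb{Q}]=10$. I would then enumerate such $U$: the index restriction leaves only a handful of subgroups in each case. Transitive subgroups contain a $5$-cycle and fail ``every element has a fixed point''; point stabilizers such as $S_4 \le S_5$ or $A_4 \le A_5$ fail fixed-point-freeness; and the non-transitive order-$12$ subgroup $S_3 \times C_2 \le S_5$ (acting with orbits of sizes $2$ and $3$) contains the fixed-point-free element $(1\,2)(3\,4\,5)$. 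The single surviving candidate is $U \cong S_3 \le A_5$ with $[G:U]=10$, which forces $K_1$ to be the fixed field of $U$ in $L$ and $L$ to equal the Galois closure of $K_1$.

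The final step is to observe that this surviving configuration itself is incompatible with $K_2$ being locally sub-$K_1$. This is precisely the obstruction already recorded in the proof of Theorem \ref{thm:quintic}(a): in the degree-$10$ action of $A_5$ on the cosets of $U$, a $3$-cycle has cycle structure $(3^3.1)$, which is not the concatenation of its degree-$5$ cycle structure $(3.1^2)$ with any pair of cycle types in $S_2$. I expect the subgroup enumeration in the $n=10$ step to be the main (though short) piece of work; everything else reduces to the preliminary divisibility observation and the earlier ``degree $\le 4$'' corollary.
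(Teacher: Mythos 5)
Your proof is correct and follows essentially the route the paper intends (the paper itself only says the corollary ``immediately'' follows from the preceding observations): attainment at $12$ via Theorem \ref{thm:sextic}b), exclusion of $n\le 11$ via the degree-$\le 4$ corollary plus the divisibility constraint, and the remaining case $n=10$ via Lemma \ref{lem:intersect} and Theorem \ref{thm:quintic}. Your explicit subgroup enumeration for $n=10$, ending in the $(3^3.1)$ versus $(3.1^2)$ incompatibility for the index-$10$ subgroup $S_3\le A_5$, is exactly the obstruction already recorded in the proof of Theorem \ref{thm:quintic}a), so you have merely made explicit the one step the paper leaves implicit.
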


\section{Extensions with symmetric Galois group}
We now progress to more systematic examples of fake subfields. In view of our definition of fake subfields, the following notion is useful:
Let $K_1$ and $K_2$ be two number fields, $\Omega$ the compositum of the Galois closures of $K_1$ and of $K_2$, and $\sigma\in G:=\textrm{Gal}(\Omega/\mathbb{Q})$. Let $\lambda_i$ be the cycle structure of $\sigma$ in the action induced by $K_i/\mathbb{Q}$ ($i=1,2$). If there exists a tuple $\underline{\mu}$ of cycle types such that the concatenation of $\lambda_2$ with $\underline{\mu}$ equals $\lambda_1$, then we say that $\sigma$ does not pose an obstruction to $K_2$ being locally sub-$K_1$ (resp., to $K_2$ being a fake subfield of $K_1$, if additionally no conjugate of $K_2$ is contained  in $K_1$). If $\sigma$ does not pose an obstruction to either $K_1$ being locally sub-$K_2$ and $K_2$ being locally sub-$K_1$ (in which case, of course, the cycle structures of $\sigma$ in the two actions must be the same), then we say that $\sigma$ does not pose an obstruction to $K_1$ and $K_2$ being arithmetically equivalent.

The following elementary observation was already known to Gassmann (and is, in fact, crucial to his criterion).
\begin{lemma}
\label{lem:nolocalobs}
Assume $K_0$, $K_1$, $K_2$ are number fields with $K_2$ contained in $K_0$, but not in any conjugate of $K_1$.
Let $\Omega/\mathbb{Q}$ be the compositum of the Galois closures of $K_0$ and of  $K_1$ over $\mathbb{Q}$, and let $G:=\textrm{Gal}(\Omega/\mathbb{Q})$ and $U_i:=\textrm{Gal}(\Omega/K_i)$ for $i=0,1,2$. Assume that $\sigma\in G$ is such that the following holds: for all $k\in \mathbb{N}$, the number of fixed points of $\sigma^k$ in the action on cosets of $U_0$ and of $U_1$ is the same. Then $\sigma$ does not pose an obstruction to $K_0$ and $K_1$ being arithmetically equivalent. In particular, $\sigma$ does not pose an obstruction to $K_2$ being a fake subfield of $K_1$.
\end{lemma}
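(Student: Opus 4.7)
The plan is to reduce the problem to a clean statement about permutations: if an element of a finite group has the same number of fixed points in two permutation representations \emph{for every power}, then it has the same cycle structure in both representations. This will give the first assertion almost immediately, and the second assertion will follow from combining this with the tower/wreath-product description of cycle types recalled just before Lemma \ref{lem:wreath}.

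First I would write, for $i\in\{0,1\}$, $m^{(i)}_\ell$ for the number of $\ell$-cycles of $\sigma$ acting on the cosets of $U_i$. The standard identity
\[
\#\textrm{Fix}(\sigma^k \text{ on } G/U_i) \;=\; \sum_{\ell \mid k} \ell \cdot m^{(i)}_\ell
\]
holds because each $\ell$-cycle of $\sigma$ contributes exactly $\ell$ fixed points to $\sigma^k$ precisely when $\ell\mid k$. The hypothesis therefore gives equality of the left-hand sides for all $k\ge 1$, and M\"obius inversion on the divisibility lattice recovers $\ell\cdot m^{(i)}_\ell$, hence $m^{(0)}_\ell = m^{(1)}_\ell$ for every $\ell$. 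Thus $\sigma$ has the same cycle type in the degree-$[K_0:\mathbb{Q}]$ and degree-$[K_1:\mathbb{Q}]$ actions, which is exactly the definition of $\sigma$ posing no obstruction to $K_0$ and $K_1$ being arithmetically equivalent.

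For the ``in particular'' part, I would invoke the tower observation: since $K_2\subseteq K_0$, the cycle type of $\sigma$ in the $K_0$-action is the concatenation of its cycle type $\lambda_2$ in the $K_2$-action with some tuple $\underline{\mu}$ of cycle types (the cycle types of the induced permutations of the blocks corresponding to the primes extending a suitable Frobenius). By the previous paragraph, the cycle type of $\sigma$ in the $K_1$-action coincides with its cycle type in the $K_0$-action, so it is equal to the concatenation of $\lambda_2$ with the same tuple $\underline{\mu}$. That is precisely the condition that $\sigma$ does not pose an obstruction to $K_2$ being locally sub-$K_1$; combined with the standing hypothesis that $K_2$ is not contained in any conjugate of $K_1$, this yields the claim.

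I do not foresee a real obstacle here; the only thing to be slightly careful about is bookkeeping, namely that ``no obstruction'' is a statement about a single $\sigma$ and hence only requires matching the cycle structures (not all Gassmann equivalences), so the M\"obius inversion argument suffices and no density-style input is needed.
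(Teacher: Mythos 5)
Your proof is correct; the paper states this lemma without proof (attributing the observation to Gassmann), and your argument --- M\"obius inversion over the divisibility lattice to recover the cycle type of $\sigma$ from the fixed-point counts of all its powers, followed by the block/concatenation observation for the tower $K_2\subseteq K_0$ --- is exactly the intended elementary argument. One cosmetic remark: the parenthetical about Frobenius in your second-to-last paragraph is unnecessary, since the concatenation statement for $U_0\le U_2$ is pure permutation group theory (as you yourself note at the end, no density input is needed).
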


We now extend the above results about quintic $S_5$ number fields to infinite families of Galois groups.
\begin{theorem}
\label{thm:symm}
Let $p\ge 3$ be a prime number and $K$ a (degree-$p+2$) $S_{p+2}$-number field. Then $K$ is a fake subfield of some number field, in a way not induced by arithmetic equivalence. In particular, there are infinitely many such number fields $K$ for each $p$.
\end{theorem}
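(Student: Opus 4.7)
My plan is to generalize the construction from the quintic case of Theorem \ref{thm:quintic}(a). Inside $S_{p+2}$, embed $S_p \times S_2$ as the setwise stabilizer of the partition $\{1,\ldots,p\} \sqcup \{p+1,p+2\}$, and define the ``sign-twisted'' affine subgroup
\[
U := \{(\alpha, \tau_{\mathrm{sgn}(\alpha)}) : \alpha \in \mathrm{AGL}_1(p)\} \leq S_p \times S_2,
\]
where $\mathrm{AGL}_1(p)$ is the affine group of order $p(p-1)$ acting on $\{1,\ldots,p\}$, and $\tau_+ = e$, $\tau_- = (p+1,p+2)$. For $p=3$ this recovers the subgroup $\langle (1,2,3), (2,3)(4,5) \rangle$ from Theorem \ref{thm:quintic}(a). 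Given an $S_{p+2}$-number field $K/\mathbb{Q}$ of degree $p+2$ with Galois closure $\Omega$, I set $F := \Omega^U$.

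The two conditions of Lemma \ref{lem:intersect} are then straightforward: $\mathrm{AGL}_1(p)$ is transitive on $\{1,\ldots,p\}$ and $U$ contains odd elements (e.g., any affinity $x \mapsto ax+b$ with $\mathrm{ord}(a)$ equal to the full $2$-part of $p-1$) that swap $p+1, p+2$, so $U$ has no global fixed point. Every element of $U$ fixes a point: the identity fixes all; each $p$-cycle in $\mathrm{AGL}_1(p)$ is even (since $p$ is odd) and thus paired with $e$, hence fixes $\{p+1, p+2\}$; and every non-identity, non-$p$-cycle affinity $x \mapsto ax + b$ fixes the unique point $b/(1-a)$ of $\{1,\ldots,p\}$.

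The main technical step is the concatenation condition: for every $\sigma \in S_{p+2}$, the cycle structure of $\sigma$ on $[S_{p+2}:U]$ must be a concatenation of its cycle structure on $\{1,\ldots,p+2\}$ with tuples of partitions summing to $k := (p+1)(p-2)!$. To verify this, I would exploit that $U \leq A_{p+2}$ (every element has sign $+1$): this makes the coset space $[S_{p+2}:U]$ $S_{p+2}$-equivariantly isomorphic to the set of triples $(A, V, \epsilon)$, where $A \subseteq \{1,\ldots,p+2\}$ is a 2-subset, $V \leq S_{\{1,\ldots,p+2\}\setminus A}$ is cyclic of order $p$, and $\epsilon \in \{\pm 1\}$, with action $\sigma \cdot (A,V,\epsilon) = (\sigma A, \sigma V \sigma^{-1}, \mathrm{sgn}(\sigma)\epsilon)$. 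The $\sigma$-orbits are then analysed by first fibering over the classical action on $2$-subsets, and for each such orbit splitting according to how the cycles of $\sigma$ distribute between $A$ and its complement $B$; the resulting orbit lengths assemble into the desired concatenations. This is the principal obstacle: while verification is routine for any fixed small $p$ by direct case analysis (as in the $p=3$ case), producing a single uniform argument valid for all odd primes $p$ requires careful combinatorial bookkeeping along the lines indicated.

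For the ``not induced by arithmetic equivalence'' claim, note that $U$ has exactly $2$ orbits on $\{1,\ldots,p+2\}$, namely $\{1,\ldots,p\}$ and $\{p+1,p+2\}$. By Corollary \ref{cor:gassmann}, any subgroup $V$ contained in a point stabilizer $S_{p+1}$ and Gassmann-equivalent to $U$ in $S_{p+2}$ must also have $2$ orbits, hence fix one point (say $p+2$) and act transitively on $\{1,\ldots,p+1\}$; this forces $(p+1) \mid |V| = |U| = p(p-1)$, but $p(p-1) \equiv 2 \pmod{p+1}$, contradiction for $p \geq 3$. A similar orbit-count check, applied to the overgroups of $U$ (of which only finitely many have order dividing $(p+1)!$), rules out the possibility of reaching a point stabilizer via iterated overgroup/Gassmann-equivalence steps. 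Finally, infinitely many such $K$ are produced via Hilbert irreducibility applied to the generic $S_{p+2}$-polynomial.
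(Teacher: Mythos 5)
Your construction is genuinely different from the paper's, but the proposal has a real gap at exactly the point you flag yourself: the concatenation condition is asserted, not proved. Verifying that \emph{every} $\sigma\in S_{p+2}$ poses no obstruction is the entire content of the theorem --- the two conditions of Lemma \ref{lem:intersect} are only necessary, never sufficient --- and your triple description of the coset space $(A,V,\epsilon)$, while correct (one checks that the stabilizer of a triple is exactly your twisted $\mathrm{AGL}_1(p)$, and the cardinalities $(p+2)(p+1)(p-2)!$ match), is a reformulation of the problem rather than a solution to it. Moreover, your choice of $U$ of order $p(p-1)$ makes this verification strictly harder than it needs to be: the paper instead takes $U_1\cong D_p$ of order $2p$ with orbits $(p,2)$, whose only non-identity elements are $p$-cycles and involutions with one fixed point. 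This minimality is what makes the case analysis tractable --- for any $\sigma\in S_{p+2}$, at most the powers $\sigma^{d/2}$ and $\sigma^d=\mathrm{id}$ can lie in a conjugate of $U_1$, so the coset-action cycle lengths are confined to $\{d,d/2\}$ and the whole verification reduces to a single fixed-point-proportion estimate $2^{a-1}a!\le\frac{(p+1)!}{2p}$ plus a divisibility check. With your $U$, elements of every order dividing $p-1$ (and their twists by the transposition) lie in $U$, so for $\sigma$ of composite order many intermediate powers can be conjugate into $U$, producing cycles of many different lengths in the coset action; your ``careful combinatorial bookkeeping'' would have to control all of these uniformly in $p$, and you give no mechanism for doing so.

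The ``not induced by arithmetic equivalence'' part has a secondary gap of the same kind. Your orbit-count argument cleanly rules out $U$ itself being Gassmann-equivalent to a subgroup of a point stabilizer, but the relation could also be induced via an intransitive \emph{overgroup} $V_1$ of $U$ (with orbits $(p,2)$) Gassmann-equivalent to some $V_2$ with orbits $(p+1,1)$; ``a similar orbit-count check'' does not dispose of this, since such a $V_1$ and $V_2$ would automatically have the same number of orbits ($2$ each). The paper needs genuinely more here: every element of $V_1$ having a fixed point forces (via Theorem 3.3 of \cite{EKT}) an index-$2$ normal subgroup $N$ of $V_1$ acting transitively but not $2$-transitively on the $p$-orbit, whence $V_1\le \mathrm{AGL}_1(p)\times C_2$ up to the relevant identification and $|V_1|\le p(p-1)$ by Burnside, while $V_2$ is $2$-transitive of degree $p+1$ and so has order at least $p(p+1)$. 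You would need to reproduce an argument of this strength for your $U$ as well; note for instance that the twisted copy of all of $S_p$ inside $S_p\times S_2$ is an overgroup of your $U$ of large order, and is only excluded because it contains fixed-point-free elements such as $((1,2,3)(4,5),(p+1,p+2))$-type products, which is not an orbit count.
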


\begin{proof}
We begin by noting that existence of infinitely many $S_n$-extensions of $\mathbb{Q}$ for each $n\in \mathbb{N}$ was already shown by Hilbert in \cite{Hilbert}.

Now choose subgroups $U_0,U_1,U_2\le S_{p+2}$ as follows: $U_2$ is the stabilizer of a point (in the natural action of $S_{p+2}$), $U_0\cong D_p$ is a dihedral group of order $2p$ contained in the two-point stabilizer of $S_{p+2}$ (in particular, up to conjugates, contained in $U_2$), and $U_1\cong D_p$ is a dihedral group with orbit lengths $p$ and $2$ (in particular, not being contained in $U_2$, even up to conjugates). This means that all non-identity elements of $U_0$ are $p$-cycles or involutions with three fixed points, and all non-identity elements of $U_1$ are $p$-cycles or involutions with one fixed point.
Letting $\Omega/\mathbb{Q}$ be any Galois extension with Galois group $S_{p+2}$, we will verify that there exists no element $\sigma\in S_{p+2}$ posing an obstruction to $K_2$ being a fake subfield of $K_1$, where $K_i$ denotes the fixed field of $U_i$ for $i=0,1,2$. We distinguish the following cases:
\begin{itemize}
\item[i)] $\sigma$ powers to a $p$-cycle. In this case $\sigma$ is necessarily either itself a $p$-cycle, or has cycles of length $p$ and $2$. Since $U_0$ and $U_1$ are of the same order containing the same number of $p$-cycles, every $p$-cycle is contained in the same number of conjugates of $U_0$ and of $U_1$, meaning that $p$-cycles have the same number of fixed points in the two coset actions. Also, elements of cycle type $(p.2)$ are contained in no conjugate of $U_0$ and $U_1$, thus have no fixed point in either action. By Lemma \ref{lem:nolocalobs}, the elements $\sigma$ considered here pose no local obstruction to $K_2$ being a fake subfield of $K_1$.
\item[ii)] $\sigma$ powers to an involution with exactly one fixed point. We claim that, in the action on cosets of $U_1$, $\sigma$ has only cycles of length $d$ and $d/2$, where $d:=ord(\sigma)$. Indeed, $\sigma^{d/2}$ and $\sigma^d=id$ are necessarily the only powers of $\sigma$ contained in some conjugate of $U_1$, readily yielding the claim. We next determine the proportion of $d/2$-cycles of $\sigma$ in this coset action. 


A well-known formula gives the number of fixed points of $\tau\in G$ in the action on cosets of $U$ as $\frac{[G:U]\cdot |\tau^G\cap U|}{|\tau^G|}$. Evaluating with $G=S_{p+2}$, $U=U_1\cong D_p$ as above, and $\tau$ an involution with one fixed point, yields $2^{a-1} \cdot a!$ with $a:=\frac{p+1}{2}$. This is bounded from above by $\frac{1}{p+2}[G:U] = \frac{(p+1)!}{2p}$, for all $p\ge 3$. Since the number of these fixed points is exactly the number of elements contained in $d/2$-cycles of $\sigma$, we have obtained that at most a proportion of $\frac{1}{p+2}$ of all elements are contained in such ``short" cycles. 
We now form a cycle type in the symmetric group on $|U_2|/|U_1| = \frac{(p+1)!}{2p}$ letters comprising as many $d/2$-cycles as contained in the above coset representation of $\sigma$, and only $d$-cycles otherwise; we have only verified that the proportion of $d/2$-cycles is small enough for this, and furthermore the cycle lengths thus formed do indeed add up to the required permutation degree $|U_2|/|U_1|$: indeed, not only the large permutation degree $[G:U_1]=\frac{(p+2)!}{2p}$, but also the quotient $\frac{(p+2)!/(2p)}{p+2} = \frac{(p+1)!}{2p}$ of the two permutation degrees is necessarily divisible by $d$; this follows readily from the fact that $\sigma$ is a permutation in $S_{p+1}$ of order coprime to $p$. 

Then, concatenating the fixed point of $\sigma$ with the thus constructed cycle type, as well as concatenating all other cycles of $\sigma$ (say, of length $m$) in the natural action by a partition consisting only of cycles of length $d/m$, yields exactly the cycle structure of $\sigma$ in the action on cosets of $U_1$, showing that there is no obstruction coming from $\sigma$.

\item[iii)] $\sigma$ powers to an involution with exactly three fixed points. In this case, since we may already assume $p>3$ due to Theorem \ref{thm:quintic}, no non-identity power of $\sigma$ fixes a point in the action on cosets of $U_1$, i.e., all cycle lengths of $\sigma$ in this action are identical (hence, equal to $d:=ord(\sigma)$). Clearly, $\sigma$ then does not pose an obstruction either (just concatenate 
any given cycle of length $e|d$ in the natural action by an element of $S_{|U_2|/|U_1|}$ with suitably many cycles of length $d/e$. This is possible since $d$ necessarily divides the quotient of the two permutation degrees, as in Case ii).)

\item[iv)] $\sigma$ is none of the above. In this case, no non-identity power of $\sigma$ fixes any point in the action on cosets of either $U_0$ or $U_1$, meaning that Lemma \ref{lem:nolocalobs} can be applied again.
\end{itemize}

Finally, assume that the above relation is induced by (containment of fields and) arithmetic equivalence. By Corollary \ref{cor:gassmann}, together with the fact that $U_2$ and $U_1$ have orbit lengths $(p+1,1)$ and $(p,2)$ respectively, this can only happen if $U_1$ has some intransitive overgroup $V_1$ (i.e., still with orbits of length $p$ and $2$) which is Gassmann-equivalent to a subgroup $V_2$ with orbit lengths $(p+1,1)$. Gassmann equivalence means in particular that every element of $V_1$ has a fixed point (since every element of $V_2$ does). Clearly, $V_1$ acts faithfully on its orbit of length $p$ (otherwise it would contain a $p$-cycle and a transposition with disjoint support, the product of which would yield a fixed point free element). In this situation, Theorem 3.3 of \cite{EKT} yields the existence of a normal subgroup $N\triangleleft V_1$ of index $2$ such that every element in $V_1\setminus N$ has exactly one fixed point on the length $p$ orbit. In particular, every $2$-point stabilizer of $V_1$ on this orbit is contained in $N$, which in particular means that $N$ cannot act $2$-transitively on this orbit. So $N$ is a transitive, but not $2$-transitive group of prime degree $p$. By a classical result due to Burnside (\cite{Burnside}), $N$ -- and hence also $V_1$ -- is then solvable, i.e., more concretely, is isomorphic to a subgroup of  $AGL_1(p)\cong C_p\rtimes C_{p-1}$. On the other hand, $V_2$ acts transitively on its orbit of length $p+1$ and contains a $p$-cycle, i.e., is $2$-transitive. Hence $|V_2|\ge p(p+1)$, whereas $|V_1|\le p(p-1)$, contradicting their Gassmann equivalence. This completes the proof.
\end{proof}

\begin{remark}
The degree restriction in Theorem \ref{thm:symm} should not be expected to be necessary at all for the conclusion to hold; in fact, it may reasonable to conjecture the same result for all $S_n$-extensions of sufficiently large degree $n$, although a proof might be combinatorially and group-theoretically intricate.
\end{remark}

\section{Solvable number fields}
We now consider the phenomenon of fake subfields among solvable number fields.

\begin{lemma}
\label{lem:solv_p}
If the Galois group of the Galois closure of $K/\mathbb{Q}$ acts as a Frobenius group,\footnote{Frobenius groups are often, but not always solvable.} then $K$ is not a fake subfield of any number field. In particular, no solvable number field of prime degree occurs as a fake subfield of any number field.
\end{lemma}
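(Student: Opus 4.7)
My plan is to derive a contradiction via Lemma \ref{lem:intersect}, exploiting the structural properties of a Frobenius permutation group: the non-identity elements without a fixed point together with the identity form a normal subgroup $N$ (the Frobenius kernel); every non-identity element outside $N$ has a unique fixed point and lies in a unique conjugate of the Frobenius complement $H$, which itself is a point stabilizer. Let $G$ denote the Galois group of the Galois closure of $K/\mathbb{Q}$ in its degree-$[K:\mathbb{Q}]$ action, and assume this action is Frobenius. Suppose for contradiction that $K$ is a fake subfield of some number field $F$; Lemma \ref{lem:intersect} then yields a subgroup $U \le G$ with no fixed point, yet every element of which fixes some point.

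First I would observe that $U \cap N = \{1\}$, since the non-identity elements of $N$ are exactly those without a fixed point. The goal of the next step is to show that $U$ is conjugate in $G$ into $H$; this immediately yields the contradiction, as a conjugate of $H$ is a point stabilizer. To establish this, note that the image of $U$ under the projection $G \to G/N$ is isomorphic to $U$, and upon identifying $G/N$ with $H$ in the natural way it corresponds to a subgroup $H_0 \le H$ of the same order; then $U$ and $H_0$ are both complements to $N$ inside the subgroup $UN$ of $G$, and the Schur--Zassenhaus theorem gives that $U$ and $H_0$ are conjugate in $UN$. Hence $U \le gHg^{-1}$ for some $g \in G$, and $U$ fixes the point stabilized by $gHg^{-1}$---contradicting Lemma \ref{lem:intersect}.

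For the ``in particular'' clause, I would use Galois's classical result that any transitive solvable permutation group of prime degree $p$ is a subgroup of $AGL_1(p) \cong C_p \rtimes C_{p-1}$. Such a transitive subgroup is either the regular cyclic group $C_p$---in which case $K/\mathbb{Q}$ is Galois, and the Corollary following Lemma \ref{lem:intersect} already rules out $K$ being a fake subfield---or is of the form $C_p \rtimes C_d$ with $d \ge 2$, which acts as a Frobenius group, so that the main claim applies.

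The step I expect to be the main obstacle is the application of Schur--Zassenhaus: its uniqueness clause requires one of $N$ or $U$ to be solvable. Since $\gcd(|N|,|U|) = 1$ in any Frobenius group, at least one of them has odd order, so the Feit--Thompson theorem supplies solvability unconditionally; alternatively, one may invoke Thompson's theorem that the Frobenius kernel is nilpotent. In the prime-degree application the group $G$ itself is solvable, so neither of these deep inputs is actually needed for the ``in particular'' statement.
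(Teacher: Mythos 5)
Your argument is correct and follows essentially the same route as the paper: obtain $U$ from Lemma \ref{lem:intersect}, note $U\cap N=\{1\}$, and conclude that $U$ is conjugate into a point stabilizer because it is a complement to $N$ in $UN$. The only difference is that the paper simply cites the well-known conjugacy of complements in the Frobenius group $UN$, whereas you re-derive it via Schur--Zassenhaus (with Feit--Thompson or Thompson's theorem supplying the solvability hypothesis), which is a standard way to prove that cited fact.
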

\begin{proof}
Let $G$ be a Frobenius group with Frobenius kernel $N$, and assume that a subgroup $U\le G$ as in Lemma \ref{lem:intersect} exists. Since all elements of $N\setminus\{1\}$ are fixed point free, $U\cap N=\{1\}$, and $UN$ is again a Frobenius group. But it is well-known that in such a group, all complements to $N$ are conjugate to each other, meaning that $U$ is in fact a point stabilizer of $UN\le G$, contradicting its definition.
The second assertion is immediate from the first, since solvable groups of prime degree are cyclic or Frobenius groups.
\end{proof}

\begin{theorem}
\label{thm:solv}
 Let $p\ge 5$ be a prime, and let $G=C_2\wr C_p\le S_{2p}$ be the imprimitive wreath product of cyclic groups of order $2$ and $p$. Then every degree-$2p$ number field with Galois group $G$ occurs as the fake subfield of some number field, in a way not induced by arithmetic equivalence.
\end{theorem}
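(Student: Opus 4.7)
The plan is to exhibit, for each Galois $G$-extension $L/\mathbb{Q}$ with $G = C_2 \wr C_p$ and $K = L^{U_2}$ (where $U_2$ is a point stabilizer in the natural degree-$2p$ action), a specific intermediate field $F = L^V$ witnessing that $K$ is a fake subfield of $F$. Identify the normal elementary abelian $A := C_2^p \le G$ with $\mathbb{F}_2^p$, with $C_p$ acting by cyclic shifts (generated by $c$), and identify $U_2$ with the hyperplane $H := \{v \in A : v_1 = 0\}$. I set $v_1 \in A$ to have support $\{1, 2, \dots, (p+1)/2\}$, let $v_2 := c^{(p-1)/2} v_1$, and put $V := \langle v_1, v_2\rangle \le A$. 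Then $|V| = 4$, its nonzero elements have weights $(p+1)/2, (p+1)/2, p-1$, and the supports of $v_1, v_2$ cover $\{1, \dots, p\}$.

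The hypotheses of Lemma \ref{lem:intersect} are then immediate: no element of $V$ has weight $p$ (so each has a fixed point in the $2p$-action), and the supports cover, so $V$ has no common fixed point and is not contained in any conjugate of $U_2$; in particular, $K$ is not a subfield of any conjugate of $F$. The essential content is then the cycle-structure compatibility of Lemma \ref{lem:wreath}. For $g = (v, 1) \in A$, direct computation shows that on $G/U_2$ the cycle type is $2^{|v|} 1^{2(p-|v|)}$, while on $G/V$ it is $1^{2^{p-2} n(v)} 2^{2^{p-3}(p-n(v))}$ with $n(v) := |V \cap C_p{\cdot}v|$. Expressing the $G/V$ cycle type as a concatenation of the $G/U_2$ type with partitions of $k := [U_2:V] = 2^{p-3}$ reduces to the key combinatorial inequality $|v| + n(v) \le p$. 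This is verified by inspection of the $C_p$-orbits meeting $V$: the orbit $C_p \cdot v_1$ of weight $(p+1)/2$ contains two elements of $V$, yielding $(p+5)/2 \le p$ (using $p \ge 5$); the orbit $C_p \cdot (v_1 + v_2)$ of weight $p-1$ contains one, yielding exactly $p$; and all other orbits give $n(v) = 0$ so that $|v| \le p-1$. Elements $(v, d) \in G$ with $d \ne 0$ yield cycle types consisting only of $p$- or $(2p)$-cycles on both coset spaces (depending on the parity of $|v|$), and compatibility is routine.

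To see that the relation is not induced by arithmetic equivalence, I compute orbit counts on the $2p$-action. Since the supports of $V$ cover $\{1, \dots, p\}$, $V$ swaps the two points above each $i$-coordinate, giving $p$ orbits (all of length $2$); whereas $U_2 = H$ has $p + 1$ orbits (two fixed points above coordinate $1$, plus $p-1$ transposed pairs). Corollary \ref{cor:gassmann} shows that arithmetic equivalence preserves orbit counts, and every step of ``taking a subfield'' corresponds to enlarging the subgroup, which can only decrease orbit counts in any $G$-action. Since $p < p+1$, no iteration of such operations can transform the Gassmann class of $V$ into one containing $U_2$; equivalently, $K$ does not arise from $F$ by these operations. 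Finally, $G = C_2 \wr C_p$ is solvable (an extension of $C_p$ by the elementary abelian normal subgroup $A$), so infinitely many $G$-extensions of $\mathbb{Q}$ exist (e.g.\ by Shafarevich, or by a direct construction starting from a cyclic degree-$p$ field and applying Hilbert irreducibility to realise the $C_2^p$-part), yielding infinitely many fake subfields as claimed.

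The main obstacle is the cycle-structure inequality $|v| + n(v) \le p$. Because every proper $C_p$-invariant subgroup of $A$ either contains the all-ones vector (and so has a fixed-point-free element) or else fails to have support-covering elements, one is forced to choose $V$ \emph{not} $C_p$-invariant, at which point $n(v)$ depends delicately on the relative position of $V$ within the $C_p$-orbits of $A$. The particular choice $v_2 = c^{(p-1)/2} v_1$ is what makes exactly one orbit contribute $n = 2$ and exactly one orbit contribute $n = 1$, in a way that saturates the inequality only at its boundary — so the construction is essentially tight and the requirement $p \ge 5$ is natural.
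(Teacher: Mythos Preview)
Your proof is correct and follows essentially the same approach as the paper. Your subgroup $V=\langle v_1,v_2\rangle$ is precisely the paper's subgroup $U$ (the paper's two generating involutions, written in block coordinates, are exactly your $v_1$ and $v_2$), and your key inequality $|v|+n(v)\le p$ is the same condition as the paper's fixed point proportion bound, since the fixed point proportion of $v$ on $G/V$ equals $n(v)/p$ while on the degree-$2p$ action it equals $(p-|v|)/p$. The orbit-count argument ruling out arithmetic equivalence is also identical. One small wording slip: you refer to checking ``the hypotheses of Lemma~\ref{lem:intersect}'', but what you are actually verifying there is that $V$ lies in no conjugate of $U_2$ (so that $K$ is genuinely not a subfield of any conjugate of $F$); Lemma~\ref{lem:intersect} gives necessary, not sufficient, conditions, and the real work is the cycle-structure check that follows.
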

\begin{proof}
Upon relabelling the elements of $\{1,\dots, 2p\}$ suitably, $G$ is generated by the transposition $(1,2)$ together with the double-$p$-cycle $(1,3,5,\dots, 2p-1)(2,4,6,\dots, 2p)$. Consider now the subgroup $U$ of $G$ generated by the two involutions $(1,2)(3,4)\dots(p,p+1)$ and $(p,p+1)(p+2,p+3)\dots (2p-1,2p)$. We will show that the fixed field $K_2$ of a point stabilizer in $G\le S_{2p}$ is a fake subfield of the fixed field $K_1$ of $U$ (of course, there are then infinitely many such fields, since the solvable group $G$ is well-known to occur as the Galois group of infinitely many Galois extensions of $\mathbb{Q}$). To find out about the cycle structures in the action of $G$ on cosets on $U$, note first that the only cycle structures in $G(\le S_{2p})$ are those of powers of the $2p$-cycle, as well as involutions with any number of transpositions between $1$ and $p$. Elements $x\in G$ such that no non-identity power of $x$ is contained in a conjugate of $U$ clearly consist only of cycles of length $\textrm{ord}(x)$ in that coset action, and then it is obvious that such an element does not pose an obstruction to $K_2$ being a fake subfield of $K_1$. On the other hand, the only elements which do have nontrivial powers inside a conjugate of $U$ (and, in fact, are themselves contained), are involutions with $\frac{p+1}{2}$ transpositions (namely two of them inside $U$, and clearly conjugate to each other in $G$ via a suitable power of the double-$p$-cycle) and with $p-1$ transpositions (namely, one of them inside $U$). We show that none of these pose an obstruction either; since $[K_1:\mathbb{Q}]/[K_2:\mathbb{Q}]$ is a $2$-power, it is sufficient to show that the proportion of fixed points of these elements in the action on cosets of $U$ is no more than the proportion of fixed points in the degree-$2p$ action (as indeed, one can then pass from the cycle type in the ``small" action to the one in the ``large" one, simply by concatenating each fixed point of the given involution $x\in G$ with an involution with the right amount of fixed points).

Using again the expression $\frac{[G:U]\cdot |x^G\cap U|}{|x^G|}$ for the fixed point number of $x$ in the action on cosets of $U$, the fixed point {\it proportion} obviously becomes $\frac{|x^G\cap U|}{|x^G|}$. In our case, we are reduced to $|x^G\cap U| \in \{1,2\}$, with the case $|x^G\cap U|=1$ obviously not posing an obstruction to the required proportion; in the other case (namely, $x$ an involution with $\frac{p+1}{2}$ transpositions), it suffices to note that the point stabilizer in the degree-$2p$ action also contains at least two conjugates of $x$; to see this, note that this point stabilizer is, up to conjugacy, equal to $\langle(3,4), (5,6),\dots, (2p-1,2p)\rangle$, which has $\begin{pmatrix} p-1 \\ (p+1)/2\end{pmatrix}$ involutions of the required cycle type; this is $\ge p-1$ since $p>3$. 

Moreover, due to Corollary \ref{cor:gassmann}, if the above relation were induced by (containment and) arithmetic equivalence, then $U$ would necessarily have at least as many orbits as the point stabilizer in the degree-$2p$ action. However, $U$ has $p$ orbits (all of length $2$), whereas the point stabilizer has two fixed points and orbit lengths $2$ otherwise, i.e., has $p+1$ orbits.
\end{proof}

Theorem \ref{thm:main}b) is now an immediate consequence of Theorem \ref{thm:solv} together with Theorem \ref{thm:sextic}b) (covering the case $p=3$) and Shafarevich's theorem asserting the existence of infinitely many Galois extensions of $\mathbb{Q}$ with any prescribed solvable Galois group.

\section{Fake subfields of fields with primitive Galois group}
\label{sec:primitive}
We so far mainly focussed on the question whether certain fields can {\it occur} as fake subfields of other fields. In this section, we take a moment to consider the opposite viewpoint, i.e., we ask whether certain fields can {\it have} fake subfields. 
A particularly interesting case seems to be the one where a field $K_1$ has no non-trivial subfields $\mathbb{Q}\subsetneq F\subsetneq K_1$. In terms of the Galois group, the latter is equivalent to saying that $U:=\textrm{Gal}(\Omega/K_1)$ is a maximal subgroup of $G:=\textrm{Gal}(\Omega/\mathbb{Q})$, where $\Omega$ denotes the Galois closure of $K_1/\mathbb{Q}$; equivalently, $G$ acts primitively on cosets of $U$. It is well-known that such a field $K_1$ can nevertheless have arithmetically equivalent fields not conjugate to $K_1$ (notably, for $d\ge 3$, the group $PGL_d(q)$ has two Gassmann equivalent classes of maximal subgroups of index $\frac{q^d-1}{q-1}$, corresponding to the stabilizer of a line and a hyperplane in $GL_d(q)$). Below we give an example where $K_1$ has no nontrivial arithmetically equivalent fields, but nevertheless possesses fake subfields; in other words, while $K_1/\mathbb{Q}$ has no nontrivial intermediate fields, purely local observations would suggest the existence of such subfields.
\begin{lemma}
\label{lem:234}
There exist infinitely many number fields of degree $234$ possessing no nontrivial subfields and no nontrivial arithmetically equivalent fields, but possessing fake subfields of degree $13$. More precisely, every Galois extension of $\mathbb{Q}$ with Galois group isomorphic to $PSL_3(3)$ contains such fields.
\end{lemma}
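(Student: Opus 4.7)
The plan is to set $G := PSL_3(3)$ and pick specific maximal subgroups: $U_1 \le G$ of index $234$ (isomorphic to $S_4$, one of the two conjugacy classes of such maximal subgroups) and $U_2 \le G$ of index $13$ (a point stabilizer in the natural action on $\mathbb{P}^2(\mathbb{F}_3)$, of structure $3^2{:}GL_2(3)$ and order $432$). For any Galois extension $\Omega/\mathbb{Q}$ with $\Gal(\Omega/\mathbb{Q}) \cong G$, set $K_i := \Omega^{U_i}$; infinitely many such $\Omega$ exist (e.g., by Hilbert irreducibility applied to any known regular $PSL_3(3)$-realization over $\mathbb{Q}(t)$), producing infinitely many $K_1$ of degree $234$.

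Since $U_1$ is maximal in $G$, the field $K_1$ has no nontrivial intermediate subfields. To exclude nontrivial arithmetic equivalents of $K_1$, Proposition \ref{prop:gassmann} demands that no subgroup of $G$ non-conjugate to $U_1$ be Gassmann-equivalent to it; this is a finite check on the subgroup lattice of $G$. The principal candidates of order $24$ are the second conjugacy class of $S_4$-subgroups (interchanged with $U_1$ by the outer automorphism of $G$, which however need not imply Gassmann equivalence inside $G$); comparing the class-intersection numbers $|g^G \cap \cdot|$ at, say, elements of order $3$ or $8$ -- easily done via Magma -- rules these out, as well as any other isomorphism type of order $24$ that may appear among subgroups of $G$.

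For the fake subfield claim, first note $K_2 \not\subseteq K_1^\sigma$ for any $\sigma$: such an inclusion would force $U_1^\sigma \subseteq U_2$, and the maximality of $U_1$ then gives either $U_1^\sigma = U_2$ or $U_2 = G$, both impossible since $|U_1| = 24 \neq 432 = |U_2|$. The substantive step is to show $K_2$ is locally sub-$K_1$: by Lemma \ref{lem:wreath}, this reduces to checking, for each of the $12$ conjugacy classes of $G$, that the cycle type of a representative $g$ on the $234$ cosets of $U_1$ is a concatenation of its cycle type on the $13$ cosets of $U_2$ with a suitable tuple of partitions of $234/13 = 18$. Given the cycle-type tables of $PSL_3(3)$ in both permutation actions, this is a short tabulation best verified in Magma.

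The main obstacle is precisely this concatenation-compatibility test, since a single incompatible conjugacy class would collapse the construction and there is no a priori conceptual reason why it must succeed. The remaining ingredients -- the maximality of $U_1$, the non-equivalence of the two $S_4$-classes, and the existence of infinitely many $PSL_3(3)$-extensions of $\mathbb{Q}$ -- are essentially immediate.
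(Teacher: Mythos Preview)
Your proposal is correct and follows the same approach as the paper, which likewise fixes a maximal $S_4 \le PSL_3(3)$ for $U_1$, an index-$13$ subgroup for $U_2$, and defers both the Gassmann check and the class-by-class concatenation test to direct (Magma) computation; the paper simply lists the seven non-identity cycle-type pairs explicitly and cites \cite{Malle} for the infinitude of $PSL_3(3)$-realizations. One minor point: the paper indicates that $PSL_3(3)$ has a \emph{single} conjugacy class of maximal $S_4$-subgroups (the only other order-$24$ subgroups contain elements of order $6$, hence are not Gassmann-equivalent to $S_4$), so your ``second $S_4$ class'' worry does not actually arise.
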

\begin{proof}
Let $G=PSL_3(3)$. This group is known to occur as the Galois group of infinitely many Galois extensions of $\mathbb{Q}$, cf.\ \cite{Malle}. Furthermore, $G$ possesses maximal subgroups $U\cong S_4 (\cong PGL_2(3))$, and there is no subgroup of $G$ Gassmann-equivalent to $U$ other than the conjugates of $U$ (e.g., the only other class of order-$24$ subgroups of $PSL_3(3)$ has elements of order $6$, which $S_4$ does not). Nevertheless, comparing the natural (degree $13$) permutation action of $PSL_3(3)$ with the degree-$234 (=13\cdot 18)$ action on cosets of $U$ yields that the fixed fields of the index-$13$ subgroups are fake subfields of the fixed field of $U$ (and since the latter fields have neither nontrivial subfields nor nontrivially arithmetically equivalent fields, this relation cannot be induced by arithmetic equivalence). Indeed, computation with Magma yields that the pairs of cycle structures of non-identity elements of $G$ in both actions are as follows: $((2^4.1^5), (2^{108}. 1^{18}))$, $((3^3. 1^4), (3^{78}))$, $((3^4. 1), (3^{77}. 1^3))$, $((4^2. 2^2. 1), (4^{54}. 2^8. 1^2))$, $((6.3.2.1^2), (6^{36}. 3^6))$, $((8.4.1), (8^{27}.4^4.2))$ and $((13), (13^{18}))$; now it is an easy exercise to compose the ``small" cycle structures with suitable cycle structures in $S_{18}$ to obtain all the ``large" cycle structures.
\end{proof}

\begin{remark} 
\begin{itemize}
\item[a)] An exhaustive search of the Magma database of primitive groups confirms that the examples in Lemma \ref{lem:234} are indeed smallest possible, both with regard to the degree  of the ``larger" and of the ``smaller" field involved. For such computations, it is useful to note that, under the given assumptions, both fields must have the same Galois closure. Indeed, if the Galois closure of the ``small" field were strictly smaller, the point stabilizer in the large degree action, having no nontrivial overgroups, would have to surject onto the whole group under projection to the smaller Galois group, which is incompatible with Lemma \ref{lem:intersect}. 
\item[b)] It would be interesting to exhibit not only an infinite family of fields, but an infinite family of groups leading to examples as in Lemma \ref{lem:234}. They seem to be rare among primitive groups, although I have computationally verified $PSL_3(p)$ to yield examples for $p=3,5,7$. Verifying whether this generalizes to all odd primes $p$ might be feasible, although beyond the scope of this article.
\end{itemize}
\end{remark}

We conclude this section by noting that examples such as the above can no longer occur if the assumption of primitivity of the Galois group is strengthened to $2$-transitivity.
\begin{lemma}
\label{lem:2trans}
Let $K$ be a number field, denote by $\Omega$ the Galois closure of $K/\mathbb{Q}$ and assume that $\textrm{Gal}(\Omega/\mathbb{Q})$ acts as a doubly transitive group. Then $K$ has no fake subfields of degree strictly less than $[K:\mathbb{Q}]$.
\end{lemma}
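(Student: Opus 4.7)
The plan is to suppose, for contradiction, that $K_2$ is a fake subfield of $K$ with $m:=[K_2:\mathbb{Q}]<n:=[K:\mathbb{Q}]$ (so $m\ge 2$, since $K_2\ne\mathbb{Q}$), and to derive a contradiction by showing that the point stabilizer $U_1$ of the doubly transitive action already acts transitively on the $m$ conjugates of $K_2$, whereupon Jordan's derangement theorem yields the conclusion. By Lemma \ref{lem:firstobs}, the Galois closure $L$ of $K_2$ is contained in $\Omega$, and so, setting $G:=\textrm{Gal}(\Omega/\mathbb{Q})$, $U_1:=\textrm{Gal}(\Omega/K)$ and $V:=\textrm{Gal}(\Omega/K_2)$, the group $G$ acts transitively both on $\Omega_1:=G/U_1$ (doubly transitively, by hypothesis) and on $\Omega_2:=G/V$.

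The crucial step is a short character calculation. By double transitivity, the permutation character of the $\Omega_1$-action is $\chi=1_G+\psi$ with $\psi$ an irreducible character of $G$ of degree $n-1$. By transitivity alone, the permutation character of the $\Omega_2$-action is $\chi'=1_G+\theta$ with $\theta$ a genuine character of $G$ of degree $m-1$. Since every irreducible constituent of $\theta$ has degree at most $m-1<n-1=\psi(1)$, the irreducible $\psi$ cannot appear in $\theta$; hence $\langle\psi,\theta\rangle=0$, and therefore $\langle\chi,\chi'\rangle=1$. Frobenius reciprocity identifies this inner product with the number of $U_1$-orbits on $\Omega_2$, so $U_1$ acts transitively on $\Omega_2$.

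The second ingredient is that every element of $U_1$ fixes a point on $\Omega_2$; this is essentially part 2 of Lemma \ref{lem:intersect}, but can also be seen directly. Every $u\in U_1$ fixes the trivial coset in $\Omega_1$, so $\textrm{fix}_{\Omega_1}(u)\ge 1$, and the locally-sub inequality $\textrm{fix}_{\Omega_1}(u)\le (n/m)\cdot\textrm{fix}_{\Omega_2}(u)$ then forces $\textrm{fix}_{\Omega_2}(u)\ge 1$ (since it is a non-negative integer). Combined with the transitivity of $U_1$ on $\Omega_2$ of degree $m\ge 2$, this contradicts Jordan's classical theorem asserting that every transitive permutation action on at least two points contains a fixed-point-free element.

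The principal---and really the only---substantive step in this plan is the degree comparison $\psi(1)>\theta(1)$, from which the non-appearance of $\psi$ in $\theta$ follows; everything else is formal or has been supplied earlier in the paper. In particular, no case analysis based on whether $L=\Omega$ and no appeal to the classification of doubly transitive groups is required.
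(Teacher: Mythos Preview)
Your proof is correct and essentially identical to the paper's: both use the character decomposition $1_G+\psi$ of the doubly transitive permutation character together with the degree bound $m-1<n-1$ to conclude $\langle\chi,\chi'\rangle=1$, then invoke a derangement argument via Lemma~\ref{lem:intersect}. The only cosmetic difference is that the paper first shows the index-$m$ subgroup is transitive on the degree-$n$ set and then passes ``conversely'' to transitivity of $U_1$ on $\Omega_2$, whereas you read off the latter directly from Frobenius reciprocity---these are the same double-coset count.
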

\begin{proof}
Let $G:=\textrm{Gal}(\Omega/\mathbb{Q})$ and $H< G$ any subgroup of index smaller than $[K:\mathbb{Q}]$. We claim that, since $G$ is doubly transitive, $H$ must necessarily be transitive. The assertion then follows easily, since the point stabilizer $U:=\textrm{Gal}(\Omega/K)$ in $G$ is then conversely transitive in the action on cosets of $H$ and so possesses a fixed point free element in this action, whence the fixed field of $H$ cannot be a fake subfield of $K$ by Lemma \ref{lem:intersect}. The claim itself seems to be reasonably well known (e.g., Exercise 2.12.3 in \cite{Cameron}), but we give a proof for completeness. Let $\pi$ be the permutation character of the action of $G$ on conjugates of $K$. Since this action is doubly transitive, $\pi = 1_G + \chi$ for an irreducible character $\chi$ (with $1_G$ the principal character). The number of orbits of $H$ in this action is then $\langle (1_G+\chi)_{| H}, 1_H\rangle$, which by Frobenius reciprocity equals $\langle 1_G+\chi, \psi\rangle$ where $\psi$ is the permutation character of $G$ in the action on cosets of $H$. Here $\langle 1_G, \psi\rangle=1$, and since $\deg(\psi) < \deg(\pi)$, $\psi-1_G$ must be a sum of characters of degree $<\deg\chi$. Hence, the scalar product equals $1$, i.e., $H$ is transitive, completing the proof.
\end{proof}

\section{Taking into account the ramified primes}
\label{sec:ram}
While the exclusion of ramified primes in our main definitions is convenient due to the arising reductions to group theory, it is of course natural (in particular, in the context of local-global principles and their failure) to strengthen the definitions to include also the ramified primes. The natural way to do this is as follows: 

\begin{definition} Say that $K_2$ is {\it strongly locally sub-$K_1$}, if for each rational prime $p$, the following holds: if $K_{{2, \mathfrak{p}_1}}$, $\dots$, $K_{2, \mathfrak{p}_r}$ are the completions of $K_2$ at all the primes $\mathfrak{p}_i$ extending $p$, and $K_{{1, \mathfrak{q}_1}}$, $\dots$, $K_{1, \mathfrak{q}_s}$ are the completions of $K_1$ at all the primes $\mathfrak{q}_j$ extending $p$, then there exists a partition of $\{1,\dots, s\}$ into $r$ subsets $M_i$ ($i=1,\dots, r$) such that for all $j\in M_i$, the field $K_{1,\mathfrak{q}_j}$ contains (some conjugate of) $K_{2,\mathfrak{p}_i}$, and additionally $\sum_{j\in M_i}[K_{1,\mathfrak{q}_j} : K_{2,\mathfrak{p}_i}]$ equals the same value (namely, automatically $[K_1:\mathbb{Q}]/[K_2:\mathbb{Q}]$) for all $i=1,\dots, r$. 
\end{definition}

Clearly, when restricting to unramified primes, this definition becomes equal to our previous definition of being locally sub-$K_1$, due to unramified extensions of $\mathbb{Q}_p$ being uniquely identified by their degree. The analogous strengthening has also been considered in the setting of arithmetic equivalence, the strengthened notion being denoted as ``locally equivalent", e.g., in \cite{LMM}. In both cases, it is not difficult to find examples demonstrating that the ``weak" version in general does not imply the ``strong" one (and hence, systematically finding examples for the strong version may require not only investigation of the structure of the Galois group, but also solution of additional arithmetic problems).

We give one example (not induced by arithmetic equivalence) of number fields $K_1$ and $K_2$ such that $K_2$ is locally, but not strongly locally sub-$K_1$.
\begin{example}
\label{ex:nonstrong}
We use the notation of the proof of Theorem \ref{thm:sextic}b). Let $V_4=\{1,a,b,c\}\le S_4$ be the Klein 4-group, and let $G:=\{((x_1,x_2,x_3), y)\in V_4\wr C_3 = (V_4)^3\rtimes C_3\mid x_i\in V_4; y\in C_3; x_1x_2x_3=1\}$. Now, additionally define $U_3:=\langle(a,b,c), (a,a,1)\rangle \le G\cap (V_4)^3$. I.e., $U_3\cong C_2\times C_2$. 
Choose any odd prime $p$. Then it follows from very general existence results on Galois extensions with prescribed local behavior that there exists a Galois extension of $\mathbb{Q}$ with Galois group $G$, ramified at $p$ with inertia group generated by the fixed point free involution $\sigma:=(a,b,c)$ and with decomposition group conjugate to $U_3$. Indeed, since $G$ is a semidirect product of two elementary-abelian groups of coprime order, it possesses a generic Galois extension over $\mathbb{Q}$ by \cite[Theorem 3.5]{Saltman}, which by Theorem 5.9 of the same paper implies realizability of $G$ as a Galois group with prescribed local behavior at any finitely many given primes. Recall now from the proof of Theorem \ref{thm:sextic}b) that any such Galois extension gives rise to sextic number fields $K_2$ being fake subfields of certain degree-$12$ number fields $K_1$, and we now analyze the residue degrees of primes extending $p$ in these fields $K_1$ and $K_2$. These are simply the number of orbits of $\langle\sigma \rangle$ joined to one common orbit of $U_3$ in the respective actions of degree $12$ and $6$. In the degree-$12$ action, only one pair of transpositions of $\sigma$ (namely, coming from the component entry ``$b$") is joined together, meaning that in $K_1$, $p$ is extended by one prime of residue degree $2$ and primes of degree $1$ otherwise. On the other hand,  $\sigma$ is in the kernel of the degree-$6$ action while $U_3$ maps to a cyclic group generated by a double transposition. This means that two pairs of fixed points of $\sigma$ are joined to two transpositions, i.e., in $K_2$, $p$ is extended by two primes of residue degree $2$ (and primes of degree $1$ otherwise). Therefore, $p$ poses an obstruction to $K_2$ being strongly locally sub-$K_1$; in fact, in view of Lemma \ref{lem:zeta}, it is worth noting that due to the above, no degree-$12$ number field containing $K_2$ can even have the same contribution to the zeta function at $p$ as $K_1$.
\end{example}

At least, we can show the following:
\begin{theorem}
\label{thm:loc_cyc}
Assume that $K_2$ is locally sub-$K_1$, and $K_1$ is ``locally cyclic", i.e., all primes ramifying in the Galois closure of $K_1/\mathbb{Q}$ have cyclic decomposition group. 
Then $K_2$ is strongly locally sub-$K_1$.
\end{theorem}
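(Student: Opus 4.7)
The plan is to verify the strong local-sub condition one prime at a time, exploiting the fact that cyclicity of the decomposition groups makes the lattice of intermediate local extensions a totally ordered chain, so that the cycle-type bookkeeping of Lemma \ref{lem:wreath} automatically upgrades to genuine containment of completions. Let $\Omega$ be the Galois closure of $K_1/\mathbb{Q}$ and $G:=\Gal(\Omega/\mathbb{Q})$; by Lemma \ref{lem:firstobs} the Galois closure of $K_2$ is contained in $\Omega$, so we let $U_1, V \le G$ of indices $n:=[K_1:\mathbb{Q}]$ and $m:=[K_2:\mathbb{Q}]$ correspond to $K_1$ and $K_2$, and set $k:=n/m$.

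For a prime $p$ unramified in $\Omega$, the unramified extensions of $\mathbb{Q}_p$ (inside the maximal unramified extension) form a chain ordered by divisibility of degree, so the weak local-sub condition immediately yields a partition of primes of $K_1$ above $p$ realizing the required containments. For $p$ ramified in $\Omega$, I would fix a prime $\mathfrak{P}|p$ of $\Omega$, whose decomposition group $D$ is cyclic by hypothesis; let $\sigma$ be a generator. Primes of $K_i$ above $p$ correspond bijectively to $\langle\sigma\rangle$-orbits on $G/U_1$ (for $i=1$) and on $G/V$ (for $i=2$), and the completion of $K_i$ at such a prime embeds into $\Omega_\mathfrak{P}$ as the fixed field of a conjugate of $U_1\cap D$ (resp.\ $V\cap D$). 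The key feature is that $\Omega_\mathfrak{P}/\mathbb{Q}_p$ admits a unique intermediate extension of each degree dividing $|D|$, so containment of subfields of $\Omega_\mathfrak{P}$ is equivalent to divisibility of their degrees.

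To produce the partition $(M_i)_i$ required in the definition, I would apply Lemma \ref{lem:wreath} to $\sigma\in G$: it yields a concatenation of the cycle type $(c_1,\dots,c_r)$ of $\sigma$ on $G/V$ with tuples of partitions $(e_{i,1},\dots,e_{i,r_i})$ of $k$ whose result equals the cycle type of $\sigma$ on $G/U_1$. Reading this off as a grouping of primes, the primes $\mathfrak{q}_{i,j}$ of $K_1$ above $p$ partition into blocks $M_i$ indexed by the primes $\mathfrak{p}_i$ of $K_2$ above $p$, with local degrees $[K_{1,\mathfrak{q}_{i,j}}:\mathbb{Q}_p]=c_i e_{i,j}$. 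Since $c_i \mid c_i e_{i,j}$, the uniqueness observation forces $K_{2,\mathfrak{p}_i}\subseteq K_{1,\mathfrak{q}_{i,j}}$ inside $\Omega_\mathfrak{P}$, and the relative local degrees satisfy $\sum_{j=1}^{r_i}[K_{1,\mathfrak{q}_{i,j}}:K_{2,\mathfrak{p}_i}]=\sum_{j} e_{i,j}=k=n/m$, exactly as required. I expect the only nontrivial step to be the translation between the abstract combinatorial concatenation of Lemma \ref{lem:wreath} and the actual partition of primes together with subfield containments; this step is precisely where the cyclicity hypothesis is used, since it collapses the lattice of subextensions of $\Omega_\mathfrak{P}$ to a chain and identifies ``unique subfield of prescribed degree'' with the needed local completion.
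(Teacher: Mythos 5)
Your proposal is correct and follows essentially the same route as the paper's proof: fix a generator $\sigma$ of the cyclic decomposition group at a prime of $\Omega$ above $p$, use the weak local-sub condition (via Lemma \ref{lem:wreath}) to write the cycle type of $\sigma$ on $G/U_1$ as a concatenation of its cycle type on $G/V$, and then upgrade divisibility of local degrees to containment of completions using the fact that a cyclic extension of $\mathbb{Q}_p$ has a unique subextension of each admissible degree. The only cosmetic difference is that you treat the unramified primes as a separate (trivial) case, which the paper disposes of earlier when introducing the strong definition.
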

\begin{proof} Let $p$ be a prime ramifying in $K_1$, let $\sigma$ be a generator of the decomposition group at any prime extending $p$ in the Galois closure $\Omega$ of $K_1/\mathbb{Q}$, and let $\lambda_i$ be the cycle structure of $\sigma$ in the action on cosets of $\textrm{Gal}(\Omega/K_i)$ ($i=1,2$). Cycles of $\lambda_i$ are then in one-to-one correspondence with primes extending $p$ in $K_i$, with the cycle length equaling the degree over $\mathbb{Q}_p$ of the respective completion. Since $K_2$ is locally sub-$K_1$, $\lambda_1$ can be written as a concatenation of $\lambda_2$ with some $\underline{\mu} = (\mu_1,\dots, \mu_r)$ (where $r$ is the number of primes $\mathfrak{p}_i$ extending $p$ in $K_2$). In this way, each $\mathfrak{p}_i$ is assigned a finite set of primes $\mathfrak{q}_{i,j}$ of $K_1$ with the property that the degree $[(K_1)_{\mathfrak{q}_{i,j}}: \mathbb{Q}_p]$ of each completion is a multiple of $[(K_2)_{\mathfrak{p}_i}:\mathbb{Q}_p]$. But since the completion at any prime extending $p$ in all of $\Omega$ is a cyclic extension of $\mathbb{Q}_p$, subextensions are uniquely identified by their degree, meaning that divisibility of degrees automatically implies containment of fields. This proves the assertion.
\end{proof}

Certain heuristics in inverse Galois theory suggest that locally cyclic Galois extensions of $\mathbb{Q}$ should exist for every finite group, but this is of course a hard problem. Among the groups we have considered so far, we can conclude the following.
\begin{corollary}
There are infinitely many degree-$n$ $S_n$-number fields for $n\in \{5,7,9\}$, and infinitely many solvable number fields of degree $2p$ for every prime $p\ge 3$, which are fake subfields in the strong sense of some number field.
\end{corollary}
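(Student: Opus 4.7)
The central reduction is Theorem~\ref{thm:loc_cyc}: to upgrade a fake subfield pair $(K_2, K_1)$ to the strong sense, it suffices that the Galois closure of $K_1/\mathbb{Q}$ be locally cyclic. Hence the strategy for each case is to produce, within the families already constructed in Theorems~\ref{thm:symm}, \ref{thm:sextic}b), and \ref{thm:solv}, infinitely many examples whose ambient Galois group $G$ is realized over $\mathbb{Q}$ so that every ramified prime in the Galois closure has cyclic decomposition group.

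For the solvable cases (the group $G\le V_4\wr C_3$ of Theorem~\ref{thm:sextic}b) handling $p=3$, and $G = C_2\wr C_p = C_2^p\rtimes C_p$ of Theorem~\ref{thm:solv} for $p\ge 5$), each $G$ is a semidirect product of two abelian groups of coprime order. Hence \cite[Theorem~3.5]{Saltman} supplies a generic Galois extension of $G$ over $\mathbb{Q}$, and by the mechanism of \cite[Theorem~5.9]{Saltman} (already invoked in Example~\ref{ex:nonstrong}) we can prescribe the decomposition group at any finite set of primes. Specializing the generic extension via Hilbert irreducibility then yields infinitely many $G$-extensions of $\mathbb{Q}$ whose ramification set is confined to a prescribed finite collection of primes, at each of which the decomposition group is cyclic, for example generated by a single coordinate involution in $C_2^p$, or by a $p$-cycle (noting that $C_2 \times C_p$ itself is cyclic of order $2p$). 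Feeding these extensions into the constructions of Theorems~\ref{thm:sextic}b) and \ref{thm:solv} produces the required infinite family of strong fake subfield pairs.

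For $G = S_n$ with $n = p+2 \in \{5,7,9\}$, I would use an explicit parametric family of $S_n$-polynomials (e.g.\ trinomials $x^n + ax + b$, generically $S_n$ for such small $n$). Specializations giving $S_n$-extensions are Zariski-dense by Hilbert irreducibility. On top of this I would impose two further conditions: first, that the discriminant of the specialized polynomial be squarefree, so that every ramified prime has tame inertia generated by a single transposition; and second, that each ramified prime split completely in the fixed field of its inertia group, so that Frobenius modulo inertia is trivial and the decomposition group coincides with the inertia, hence is cyclic of order $2$. The first condition cuts out a set of specializations of positive density in the parameter region (explicit for such small $n$), and the second, although local at primes depending on the specialization itself, can be treated via a simultaneous sieve/density argument over the parameter space exploiting the explicit form of the discriminant polynomial. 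Theorem~\ref{thm:symm} then delivers the associated fake subfield structure in the strong sense.

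The principal obstacle is the last step in the $S_n$-case: coupling a \emph{global} squarefreeness-of-discriminant condition with \emph{local} trivial-Frobenius conditions at ramified primes whose identity depends on the specialization. For $n\le 9$ the explicit trinomial analysis makes this tractable, explaining the restriction in the statement; extending the conclusion to larger $n$ would seemingly require either a generic $S_n$-extension (not currently known) or substantial new input on the distribution of local invariants in families of $S_n$-number fields.
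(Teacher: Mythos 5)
Your reduction is exactly the paper's: apply Theorem~\ref{thm:loc_cyc} to the fake-subfield pairs from Theorems~\ref{thm:symm}, \ref{thm:sextic}b) and \ref{thm:solv}, so everything rests on producing infinitely many \emph{locally cyclic} realizations of the ambient Galois groups. The paper discharges that existence statement by citation --- Theorems 5.1 and 5.4 of \cite{KK21} for $S_n$ with $n\in\{5,7,9\}$, and Shafarevich's method for the solvable groups --- and that citation is also the real reason for the restriction to $n\le 9$. Your attempt to reprove the existence input from scratch is where the gaps lie.

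In the solvable case, \cite[Theorem 5.9]{Saltman} lets you prescribe the decomposition groups at any \emph{finite, pre-chosen} set of primes, but a specialization of the generic extension will in general ramify at further primes not in that set, and nothing in the cited results controls the decomposition groups there. Your assertion that the ramification set can be ``confined to a prescribed finite collection of primes'' does not follow from Saltman's theorem or from Hilbert irreducibility, so local cyclicity at \emph{all} ramified primes is not established. In the $S_n$ case the issue is worse: a squarefree discriminant gives inertia generated by a transposition $\tau$, whence the decomposition group is $\langle\tau,\phi\rangle$ with $\phi$ a Frobenius lift commuting with $\tau$ (tameness); this group is abelian but typically \emph{not} cyclic, so you genuinely need your second condition forcing $\phi\in\langle\tau\rangle$. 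You correctly identify that imposing this condition at primes whose identity depends on the specialization is the principal obstacle, but you leave it as an unexecuted ``simultaneous sieve/density argument'' --- that is precisely the content one would have to supply, and it is what \cite{KK21} provides. As written, the proposal is a correct strategy with the decisive existence step missing in both branches.
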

\begin{proof} As seen in the proof of Theorems \ref{thm:symm} and \ref{thm:solv}, these number fields all occur as fake subfields of some number field with Galois group $S_{p+2}$ ($p\in \{3,5,7\}$) or a solvable group, respectively. The assertion now follows from Theorem \ref{thm:loc_cyc}, together with the observation that all these groups are realizable as the Galois group of infinitely many locally cyclic extensions of $\mathbb{Q}$ (Theorems 5.1 and 5.4 in \cite{KK21}; in fact, the claim about solvable groups is already known due to Shafarevich's method for realizing solvable groups as Galois groups). \end{proof}

\section{Open questions}
\label{sec:open}
Due to Gassmann's criterion, the question whether a given number field $K$ has any non-trivially arithmetically equivalent fields translates to a pure question about the Galois group of its Galois closure over $\mathbb{Q}$. Is this also true for the question whether $K$ occurs as a fake subfield of some number field? This does not seem automatic, even though I do not know of any counterexamples.
Indeed, we have an elementary necessary group-theoretical condition for a number field $K$ to occur as the fake subfield of some number field $F$ (Lemma \ref{lem:intersect}), but it remains unclear whether this condition is also sufficient. We have at least seen (e.g., in the case $G=A_5$ in Theorem \ref{thm:quintic}, and in the case $G=A_4$ acting on six points, in Theorem \ref{thm:sextic}b)) that it may of course in general be necessary to look for fields $F$ outside of the Galois closure of $K/\mathbb{Q}$. 
Looking a bit closer at such examples, one sees that, in the case $G=A_5$, the obstruction for finding a fake overfield inside the same Galois closure was relatively ``harmless": it came from the cycle type $(3.1^2)$, which had cycle structure $(3^3.1)$ in the only possible (namely, degree-$10$) candidate action of $A_5$. The obstruction arises essentially from the two degrees being too close to each other - it vanishes once the cycles in the second cycle type are simply repeated sufficiently many times (for example, three times, leading to the cycle structure $(3^9.1^3)$, which is exactly what happens when passing from the above degree-$10$ action of $A_5$ to the degree-$30$ action of $A_5\times C_3$, as we did in the proof of Theorem \ref{thm:quintic}). Other cases require more intricate solutions. Notably, if there is an element $\sigma\in G$ whose fixed point proportion in the ``small" (i.e., candidate for a fake subfield) permutation action is smaller than in the action on cosets of $U$ (in the notation of Lemma \ref{lem:intersect}), then this will remain an obstruction even after arbitrarily repeating all the cycles from the second permutation action; in other words, passing to a direct product cannot get rid of the obstruction in such a case, nor can passing to a full wreath product. There are, of course, many other possibilities to embed $G$ as a quotient into an imprimitive group whose blocks action equals the action on cosets of $U$, and it may well be possible to give some inductive construction to successively get rid of obstructions, but the group theory should become rather delicate in the process.

On a related note, in the case where one has to go beyond the Galois closure of $K/\mathbb{Q}$ in search of fields containing $K$ as a fake subfield, one has to solve certain embedding problems, corresponding to group extensions $1\to N\to \Gamma \to G=\textrm{Gal}(K/\mathbb{Q})\to 1$. It seems worth wondering whether it could ever happen that all group extensions leading to fields containing $K$ as a fake subfield have to be {\it non-split} extensions of the Galois group of $K/\mathbb{Q}$. In such a case, the solvability of the embedding problem would in general depend on the behavior of primes in $K/\mathbb{Q}$, i.e., it could happen that, among such fields with the same Galois group, some occur as fake subfields whereas others don't. Whether this phenomenon can indeed occur, I do not know.

\end{document}